\numberwithin{equation}{section}
\newtheorem{theorem}{Theorem}[section]
\newtheorem{lemma}[theorem]{Lemma}
\newtheorem{remark}[theorem]{Remark}
\newtheorem{definition}[theorem]{Definition}
\definecolor{vine}{rgb}{0.7,0.1,0.1}
\definecolor{vine}{rgb}{0.7,0.1,0.1}
\definecolor{darkgreen}{rgb}{0,0.5,0}
\newcommand{\R}{{\mathbb R}}
\newcommand{\N}{{\mathbb N}}
\newcommand{\eps}{\varepsilon}
\renewcommand{\d}{{ \mathrm{d}}}
\newcommand{\no}[2]{ \left\| #1 \right\|_{#2} }
\renewcommand{\div}{\operatorname{div}\,}
\newcommand{\weak}{\rightharpoonup}
\newcommand{\Sp}{{\mathbb{S}}}
\newcommand{\loc}{{\operatorname{loc}}}
\newcommand{\Id}{{\mathbb{I}}}
\newcommand{\EE}{\textbf{e}}
\newcommand{\spc}{\,\,\,\,\,\,\,}
\newcommand{\ZZ}{{\mathcal Z}}
\newcommand \nc{\newcommand}
\nc{\ba}{\begin{array}}\nc{\ea}{\end{array}}
\nc{\be}{\begin{eqnarray}}\nc{\ee}{\end{eqnarray}}
\nc{\beq}{\begin{equation}}\nc{\eeq}{\end{equation}}
\nc{\bex}{\begin{eqnarray*}}\nc{\eex}{\end{eqnarray*}}
\nc{\btm}{\begin{theorem}} \nc{\etm}{\end{theorem}}
\nc{\blm}{\begin{lemma}} \nc{\elm}{\end{lemma}}
\nc{\ld}{\lambda}
\nc{\va}{\varphi}
\nc{\ve}{\varepsilon}
\def\pa{\partial}
\def\pf{\noindent{\bf Proof.\quad}}
\author{Joshua Kortum\footnote{Julius-Maximilians-Universität W\"urzburg, Institute of Mathematics, Emil-Fischer-Straße 40, 97074 Würzburg, Germany (joshua.kortum@uni-wuerzburg.de)}, \ \ Changyou Wang\footnote{Department of Mathematics, Purdue University, West Lafayette, Indiana, 150 N. University Street, West Lafayette, IN 47907--2067, (wang2482@purdue.edu)}}
\title{Existence and compactness of global weak solutions of three-dimensional axisymmetric Ericksen-Leslie system}
\date{}
\begin{document}
\maketitle

\begin{abstract}
In dimension three, the existence of global weak solutions to the axisymmetric simplified Ericksen-Leslie system without swirl is established.   This is achieved by analyzing weak convergence of solutions of the axisymmetric Ginzburg-Landau approximated solutions as the penalization parameter $\varepsilon$ tends to zero.  The proof relies on the one hand on the use of a blow-up argument to rule out energy concentration off the $z$-axis, which  exploits the topological restrictions of the axisymmetry. On the other hand, possible limiting energy concentrations on the $z$-axis can be dealt by a cancellation argument at the origin. Once more, the axisymmetry plays a substantial role. We will also show that
the set of axisymmetric solutions without swirl $(u,d)$ to the simplified Ericksen-Leslie system is compact under weak convergence in $L^\infty_tL^2_x\times L^2_tH^1_x$.
\end{abstract}


{\bf Key Words:} axisymmetry, Ericksen-Leslie system, concentration-cancellation\\
{\bf AMS-Classification (2020): 35Q35, 35D30, 76A15}  \\

\section{Introduction}
In this paper, we consider the simplified form of the Ericksen-Leslie system in dimension three. It represents a set of partial differential equations modeling the hydrodynamics of nematic liquid crystals, which was proposed by Ericksen \cite{ericksen1962} and Leslie \cite{leslie1968} around the 1960's. In a simplified version proposed by Lin \cite{lin1989}, it  reads
\begin{align}
\partial_t u + (u \cdot \nabla ) u  - \mu \Delta u+ \nabla P &= -\lambda \div \left( \nabla d \odot \nabla d - \frac{|\nabla d|^2}{2} \Id_3 \right) ,  \label{momentum} \\
																\div u & =0,\label{incompress}\\
				\partial_t  d + (u\cdot \nabla ) d  &= \gamma (  \Delta d + |\nabla d|^2 d). \label{director}
\end{align}
This system is posed on a bounded smooth domain $\Omega \subset \R^3$ and for an arbitrary time horizon $0<T\leq +\infty$. The constitutive quantities of the system are given by the velocity field of the nematic fluid  $u: \Omega\to \R^3$, a pressure function $P :\Omega \to \R$ and a unit vector field $d : \Omega \to \Sp^2$ displaying the macroscopic (average) orientation field of the nematic liquid crystal molecules. The positive, physical constants $\mu, \lambda$ and $\gamma$ represent the viscosity coefficient of the fluid, the competition of potential and kinetic  energy of the fluid and the microscopic elastic relaxation time for the molecular orientation field, respectively. Furthermore, the matrix $\mathbb{I}_3$ is the three-dimensional identity matrix and $\nabla d \odot \nabla d= [\partial_i d \cdot \partial_j d]_{i,j=1,2,3}$. Being a dissipative type equation, the system \eqref{momentum}--\eqref{director} is supplemented by initial and boundary conditions 
\begin{align}
	& (u,d)(x,0) 	 =  (u_0,d_0)(x), 	& x \in \Omega ,  \label{iniData}\\
	& (u,d)(x,t)   = (0,d_0(x,t) )   										& x \in \partial \Omega, ~t >0 \label{boundaryData}
\end{align}
for given initial data $(u_0,d_0) : \Omega \to \R^3 \times \Sp^2$ with $\div u_0 =0$ and $|d_0|\equiv 1$ on $\Omega$. In particular, the non-slip boundary condition for $u$ and the Dirichlet boundary condition for $d$ is assumed throughout this paper.

System \eqref{momentum}--\eqref{director} in principle consists  of the Navier-Stokes equations coupled with the heat flow of harmonic maps into the unit sphere. The strong coupling, in particular the Ericksen stress tensor induced by $d$ on the right-hand side of the momentum equation \eqref{momentum}, significantly complicates the establishment of an existence/well-posedness theory to the system. 
There has seen a substantial amount of studies devoted to the system \eqref{momentum}--\eqref{director} over the past several years. 
In dimension two, the problem has been essentially solved. For instance, partly inspired by the seminal work of Struwe \cite{struwe1985} on the heat flow of harmonic maps from Riemann surfaces, Lin-Lin-Wang \cite{lin2010} have established the global existence of Leray-Hopf type weak solutions of \eqref{momentum}--\eqref{director}  over any bounded domain $\Omega\subset\mathbb R^2$ with at most finitely many singular times (see Hong \cite{hong2010} for a similar result when $\Omega=\mathbb R^2$). And extensions of \eqref{momentum}-\eqref{director} to general Ericksen-Leslie systems have also been studied by Huang-Lin-Wang \cite{huang2014}, and the uniqueness of solutions was established by Lin-Wang in \cite{lin2010b}. We would like to point out that in general global-in-time existence of smooth solutions of \eqref{momentum}--\eqref{director} cannot be expected. Similar to the heat flow of harmonic maps (see Chang-Ding-Ye \cite{chang1992}), examples of finite time singularities have been constructed for the simplified Ericksen-Leslie system \eqref{momentum}--\eqref{director} by Lai-Lin-Wang-Wei-Zhou \cite{lai2022} in dimension two and by Huang-Lin-Liu-Wang \cite{huang2016} in dimension three. 
When the dimension $n=3$, there are very few results available in the literature: 
For any initial data $(u_0, d_0)$, with small norm $\|u_0\|_{{\rm{BMO}}^{-1}(\mathbb R^n)}+\|d_0\|_{\rm{BMO}(\mathbb R^n)}$
for $n\ge 2$, Wang \cite{wang2011} has established a well-posedness theory of \eqref{momentum}--\eqref{director}. For an initial data $(u_0, d_0)\in L^2_{\rm{div}}(\Omega)\times H^1(\Omega,\mathbb S^2)$ with $d_0(\Omega)\subset\mathbb S^2_+$ (the upper hemisphere) for any bounded domain $\Omega\subset\mathbb R^3$, a global Leray-Hopf type weak solution of \eqref{momentum}--\eqref{director} was shown to exist by Lin-Wang \cite{lin2016}. How to construct global weak solutions of \eqref{momentum}--\eqref{director} for initial data with finite energy has remained to be an open problem. We would like to mention that 
there has been local well-posedness and global stability theorem on general Ericksen-Leslie systems in certain Sobolev or Besov spaces by Hieber-Pr\"uss \cite{hieber2017, hieber2018} in the isothermal case, and by De Anna-Liu \cite{deanna2019} in the non-isothermal case. We also mention
the paper \cite{feng2020} by Feng-Hong-Mei  on the existence of local strong solutions of the Ericksen-Leslie system for initial data
in higher order Sobolev spaces.

In this paper, we will study the problem of existence of solutions of \eqref{momentum}--\eqref{director} in the axisymmetric setting through analyzing the convergence of
axisymmetric solutions to the corresponding Ginzburg-Landau approximation system, as the parameter $\varepsilon\to 0$. Recall that Lin and Liu \cite{lin1995} initiated the study of Ginzburg-Landau approximation of \eqref{momentum}--\eqref{director},
in which they have relaxed the nonlinear constraint $|d|=1$ by a penalization energy term reading in full
\begin{align*}
        E_\eps (d_\eps) = \int_\Omega \frac{|\nabla d_\eps|^2}{2} + \frac{(1-|d_\eps|^2)^2}{4 \eps^2}
\end{align*}
for $\eps>0$ and $d_\eps:\Omega \to \R^3$. Consequently, equation \eqref{director} is replaced by 
\begin{equation} \label{director1}
\pa_t d_\eps + (u_\eps \cdot \nabla) d_\eps  = \gamma \left( \Delta d_\eps +\frac{1 - |d_\eps|^2}{\eps^2} d_\eps \right)
\end{equation}
and the well-posedness theory of \eqref{momentum},\eqref{incompress}, and \eqref{director1} is essentially limited by the well-posedness theory for the Navier-Stokes equation. In fact, Lin and Liu proved in \cite{lin1995} the existence of global-in-time smooth solutions in two dimensions, and global-in-time weak solutions as well as small strong solutions in three dimensions. A natural idea is then to send $\varepsilon \to 0^+$ and recover the original system. In three dimensions, this idea has been successfully implemented by \cite{lin2016} when the image of initial director $d_0$ is contained in $\Sp^2_+$. In two dimensions, for an arbitrary initial data of finite energy, the convergence as $\eps \to 0^+$ was proven by \cite{kortum2020} and \cite{du2022}. 
However, besides the mentioned cases this idea has not yielded a satisfactory existence theory of weak solutions in three dimensions. Our goal in this paper is to extend the theory to the three-dimensional axisymmetric case. 

Now let us recall the axisymmetric form of \eqref{momentum}--\eqref{director} without swirl, which was considered in \cite{huang2016}. 
If we denote the cylinderical coordinate frame by
\begin{align*}
    \EE_r= \begin{pmatrix}
        \cos \theta \\ \sin \theta \\0
    \end{pmatrix}, \quad 
    \EE_\theta= \begin{pmatrix}
        -\sin\theta \\ \cos \theta \\0
    \end{pmatrix}, \quad
    \EE_z= \begin{pmatrix}
        0 \\ 0 \\1
    \end{pmatrix},
\end{align*}
then we may write $u=u^r \EE_r +u^\theta \EE_\theta + u^z \EE_z$ and $d= d^r \EE_r + d^\theta \EE_\theta+ d^z \EE_z$.
Axisymmetry requires invariance under rotations around the $z$-axis of the quantities, i.e.\
$$ Q^\top \circ (u,d) \circ Q = (u,d)$$
for all $Q\in SO(3)$ being a rotation w.r.t.\ the $z$-axis. Furthermore, if $u^\theta \equiv 0, ~ d^\theta \equiv 0$,
we say $(u,d)$ is axisymmetric without swirl. In the axisymmetric case without swirl, the velocity, pressure and director field reduce to
\begin{align*}
        & u(r,\theta, z, t) = u^r(r,z,t) \EE_r + u^z(r,z,t) \EE_z, \\
        & d(r,\theta, z, t) = \sin \varphi(r,z,t) \EE_r + \cos \varphi(r,z,t) \EE_z, \\
        &P(r, \theta, z, t) = P(r,z,t)
\end{align*}
for a function $\varphi: \Omega \to \R$. In the following, we let  $\mu=\lambda = \gamma=1$ as it is not essential for the analysis. Then the axisymmetric version without swirl of \eqref{momentum}--\eqref{director}  reads
\begin{align}
	& \left( \pa_t + b \cdot \nabla  -  L  + \frac{1}{r^2} \right) u^r+P_r = -\left(L\varphi - \frac{\sin 2\varphi}{2r^2} \right) \varphi_r,
	\label{momentumRadial}\\
	& \left( \pa_t  +b\cdot \nabla   -  L \right) u^z +P_z = -\left(L\varphi - \frac{\sin 2\varphi}{2r^2} \right) \varphi_z, \label{momentumZ}\\
	&\frac1r (ru^r)_r +(u^z)_z=0, \\
	& \left( \pa_t   + b\cdot \nabla - L\right) \varphi = - \frac{\sin 2\varphi}{2r^2}, \label{HMF}
\end{align}
where 
\begin{align*}
	& b= u^r \EE_r + u^z \EE_z, \\
	& L:= \partial_r^2 + \frac{1}{r}\partial_r + \partial_z^2
\end{align*}
on an axisymmetric domain $\Omega\subset\mathbb R^3$, see Tsai \cite{tsai2018} for relevant discussion on the Navier-Stokes equation. In \cite{huang2016}, it was proven that the  axisymmetric solution without swirl in special cases blows up or remains smooth depending on topology of $d_0$. These results are in accordance with \cite{chang1992} on the heat flow of harmonic maps from surfaces, and the questions arise whether the blow-up solutions might be continued as weak solutions.

Throughout the paper, we write $\Sp^2$ for the standard $2$-sphere, $\Omega\subset \R^3$ is a contractible, axisymmetric and smooth domain, $T>0$ an arbitrary but fixed positive life span and $\ZZ = \{(x_1,x_2,x_3)\in \Omega | ~ x_3=0\}$ the $z$-axis. We use the standard notation for continuously differentiable, Sobolev \ spaces by $X= C^k(\Omega) , W^{k,p}(\Omega)$
for $k\ge 0$ and $1<p<\infty$, most of the time suppressing the target spaces. For (weakly) divergence-free function spaces, we write $X_{\div}$. 
Also write $H^1$ (or $H^1_0$) for $W^{1,2}$ (or $W^{1,2}_0$).

We first give the definition of a weak solution of 
\eqref{momentum}--\eqref{director}:
\begin{definition}		\label{defWeakSolution}
 A pair of functions
\begin{align*}
	u \in &  L^\infty (0,\infty; L^2_{\div}(\Omega) ) ~  \cap ~ L^2(0,\infty; H_{0}^1(\Omega)),  \\
  	d \in  &  L^\infty (0,\infty; H^1(\Omega,\Sp^2)) ~ \cap  ~H^1(0,\infty;L^{p}(\Omega))
\end{align*}
for some $p>1$ is called a weak solution
to the initial value problem \eqref{momentum}--\eqref{director} subject to the initial and boundary conditions \eqref{iniData}--\eqref{boundaryData} if 
\begin{align}
& \int_0^\infty \int_\Omega \left[ - u  \cdot\partial_t \phi - u \otimes u : \nabla \phi + \nabla u : \nabla \phi - \nabla d \odot \nabla d : \nabla \phi \right] \d x \d t  = \int_\Omega u_0 \cdot  \phi \d x , \label{weakMomentum} \\
& \int_0^\infty \int_\Omega \left[ \partial_t d \cdot \xi + (u \cdot \nabla ) d  \cdot \xi + \nabla d : \nabla
\xi - |\nabla d|^2 d \cdot \xi \right] \d x \d t  = 0 \label{weakDirector}
\end{align}
hold true for all $\phi \in C_{0,\div}^\infty( \Omega \times [0,\infty))$ and $ \xi \in C^\infty_0( \Omega \times [0,\infty))$. Additionally, $(u,d)$  attend the initial data $(u_0,d_0) \in L^2_{\div}(\Omega) \times H^1(\Omega, \Sp^2)$  in the weak sense, i.e.
\begin{align*}
	\int_\Omega u(t) \cdot \psi  \d x \to \int_\Omega u_0\cdot  \psi \d x, \qquad 
	\int_\Omega d(t) \cdot \zeta  \d x \to \int_\Omega d_0 \cdot\zeta  \d x
\end{align*}
for all $\psi \in C_{\div}^\infty( \Omega)$ and  $ \zeta \in C^\infty_0( \Omega)$ as $t \to 0^+$. Furthermore, the weak energy inequality is satisfied, i.e.\ 
\begin{align} \label{energyInequality}
\int_\Omega \left[ \frac{|u(t)|^2}{2}+ \frac{|\nabla d(t)|^2}{2} \right]\,dx 
&+ \int_0^t \int_\Omega \left[ |\nabla u|^2 + \left| \Delta d + |\nabla d|^2 d \right| \right]\,dxdt\nonumber\\
&\leq \int_\Omega \left[ \frac{|u_0|^2}{2}+ \frac{|\nabla d_0|^2}{2} \right]\,dx 
\end{align}
for a.e.\ $t>0$.
\end{definition}
The definition coincides with the usual definition of weak solutions for the heat flow of harmonic maps (c.f.\ \cite{lin2008}
) and the Navier-Stokes equations \cite{robinson2016}. 

Now we state our main result.


\begin{theorem} \label{thm:weakExistence}
Suppose  that $\Omega$ is a simply-connected, axisymmetric smooth domain and $u_0\in L^2_{\div}(\Omega)$  and $d_0\in H^1(\Omega, \Sp^2)$, with $d_0\in H^{\frac32}(\partial\Omega,\mathbb S^2)$, are axisymmetric without swirl.  Then there exists 
a global weak solution $(u, d, P):\Omega\times [0,\infty)\to \mathbb R^3\times\mathbb S^2\times \mathbb R$ to \eqref{momentum}--\eqref{boundaryData} 
that is axisymmetric without swirl.
\end{theorem}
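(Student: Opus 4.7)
The plan is to construct $(u,d)$ as the weak limit, along a subsequence $\varepsilon\to 0^+$, of solutions $(u_\varepsilon, d_\varepsilon)$ of the Ginzburg--Landau approximated system \eqref{momentum}, \eqref{incompress}, \eqref{director1}. First I would mollify the initial data to produce $(u_0^\varepsilon, d_0^\varepsilon)$ smooth, axisymmetric without swirl, with $|d_0^\varepsilon|\le 1$ and energy converging to that of $(u_0, d_0)$. The existence result of Lin--Liu \cite{lin1995} provides a global weak solution of the approximated system, which is axisymmetric without swirl by uniqueness within that class (alternatively, one solves the reduced $\varepsilon$-version of \eqref{momentumRadial}--\eqref{HMF} directly). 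The basic energy identity yields, uniformly in $\varepsilon$, bounds on $u_\varepsilon$ in $L^\infty_tL^2_x\cap L^2_tH^1_x$, on $\nabla d_\varepsilon$ in $L^\infty_tL^2_x$, on $\varepsilon^{-1}(1-|d_\varepsilon|^2)$ in $L^\infty_tL^2_x$, and on the $\varepsilon$-tension field $\Delta d_\varepsilon+\varepsilon^{-2}(1-|d_\varepsilon|^2)d_\varepsilon$ in $L^2_{t,x}$. Combined with standard time-derivative bounds, these give, via Aubin--Lions, strong $L^2_{t,x}$ convergence of $(u_\varepsilon, d_\varepsilon)$ to some $(u,d)$ with $|d|=1$ a.e., and weak $L^2$ convergence of $\nabla d_\varepsilon$ to $\nabla d$.

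All terms in the target weak formulation \eqref{weakMomentum}--\eqref{weakDirector} then pass to the limit routinely except the quadratic expressions in $\nabla d_\varepsilon$ that appear in the Ericksen stress $\nabla d_\varepsilon\odot\nabla d_\varepsilon$ and in the tension term $|\nabla d_\varepsilon|^2 d_\varepsilon$. I introduce the nonnegative Radon measure $\nu$ obtained as the weak-$\ast$ limit of $(|\nabla d_\varepsilon|^2-|\nabla d|^2)\,dxdt$, and set $\Sigma:=\supp \nu$. The $\varepsilon$-regularity available for the approximated system (cf.\ \cite{lin1995,lin2016}) provides a threshold $\eta_0>0$ such that, on any parabolic cylinder of radius $r$ centered at a point where the rescaled energy $r^{-1}\int|\nabla d_\varepsilon|^2\,dxdt$ stays below $\eta_0$, the pair $(u_\varepsilon, d_\varepsilon)$ enjoys uniform local smooth bounds and converges classically. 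Hence $(u_\varepsilon, d_\varepsilon)\to(u,d)$ smoothly off $\Sigma$, and the whole problem reduces to proving $\nu\equiv 0$; this would immediately yield the strong $L^2_{\mathrm{loc}}$ convergence of $\nabla d_\varepsilon$ needed to identify every nonlinear term.

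For concentration off the $z$-axis: let $(x_0,t_0)\in\Sigma$ with $r_0:=\dist(x_0,\ZZ)>0$ and perform a parabolic blow-up at $(x_0,t_0)$ at a scale $\lambda_\varepsilon\downarrow 0$ tuned so that the rescaled solutions carry a fixed amount of energy in the unit cylinder. Struwe-type bubbling then produces, in the limit, a non-constant finite-energy smooth harmonic map $\omega$ from a $2$-plane into $\Sp^2$ (the $\theta$-invariance of the axisymmetric problem effectively suppresses one spatial direction in the blow-up). The key observation is that on $B_{r_0/2}(x_0)$ the cylindrical frame is smooth and bounded away from its singular axis, so after rescaling both $\mathbf{e}_r$ and $\mathbf{e}_z$ freeze to fixed orthonormal unit vectors $\mathbf{e}_r^0, \mathbf{e}_z$, and the without-swirl ansatz $d_\varepsilon=\sin\varphi_\varepsilon\,\mathbf{e}_r+\cos\varphi_\varepsilon\,\mathbf{e}_z$ forces $\omega$ to take values in the great circle $\Sp^1:=\Sp^2\cap\mathrm{span}\{\mathbf{e}_r^0,\mathbf{e}_z\}$. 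But any finite-energy harmonic map $\R^2\to\Sp^1$ lifts to a finite-energy harmonic map $\R^2\to\R$, which must be constant, contradicting the non-triviality of $\omega$. Hence $\Sigma\subset\ZZ\times(0,T)$.

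The remaining case, $\Sigma\subset\ZZ\times(0,T)$, is the hardest: genuine axisymmetric-without-swirl harmonic bubbles exist on the axis (for instance the identity $\Sp^2\to\Sp^2$), so topology alone cannot rule out concentration. Here I would use cancellation instead. On $\ZZ$, axisymmetry forces $d=\pm\mathbf{e}_z$, and the $SO(2)$-invariance of $\nu$ around $\ZZ$ forces, at every axis point, $\nu$ to be diagonal in the cylindrical frame with $\nu_{rr}=\nu_{\theta\theta}$. Testing the putative defect contribution to the momentum equation, $-\int(\nu-\tfrac12(\tr \nu)\,\Id_3):\nabla\phi\,dxdt$, against a divergence-free axisymmetric-without-swirl test field $\phi=\phi^r\mathbf{e}_r+\phi^z\mathbf{e}_z$, and exploiting the degeneracy of the $\mathbf{e}_r\otimes\mathbf{e}_r$ and $\mathbf{e}_\theta\otimes\mathbf{e}_\theta$ directions at $r=0$ (made quantitative by a further blow-up at a singular axis point, as hinted at in the abstract), should yield the required cancellation. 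Once $\nu\equiv 0$, strong $L^2_{\mathrm{loc}}$ convergence of $\nabla d_\varepsilon$ identifies the remaining nonlinear terms; the pressure $P$ is then reconstructed from the momentum equation by Hodge theory, the weak energy inequality \eqref{energyInequality} follows by lower semicontinuity, and the initial data are attained by standard weak-continuity arguments. I expect the main analytical obstacle to lie precisely in rigorously executing this structural cancellation near the singular axis points, because the cylindrical frame itself degenerates at $r=0$ and the axisymmetric bubbles retain the full three-dimensional symmetry that makes them genuine.
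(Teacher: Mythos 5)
The decisive step of your plan is left unproved, and the route you sketch for it points in a direction the paper shows is neither necessary nor (as far as the argument goes) achievable by symmetry considerations alone. On the axis you propose to prove that the defect measure vanishes, $\nu\equiv 0$, by exploiting $SO(2)$-invariance ($\nu_{rr}=\nu_{\theta\theta}$) and a ``degeneracy'' of the frame at $r=0$, but you only assert that this ``should yield the required cancellation.'' The paper never rules out concentration on $\ZZ$ -- indeed it expects it may occur -- and its mechanism is different: one first verifies the momentum identity for the limit $(u,d)$ using no-swirl solenoidal test fields, which are automatically poloidal, $\phi=-\frac{\partial_z\psi}{r}\EE_r+\frac{\partial_r\psi}{r}\EE_z$ with $\phi^r(0,z,t)=0$ (so the possible concentration of the $\EE_r$-component of the Ericksen force is invisible), and with the extra restriction $\phi^z|_{r=0}=0$; then the restriction on $\phi^z$ is removed by a capacity-type cut-off $\eta_k$ (logarithmic, vanishing $2$-capacity of the axis in the $(r,z)$-plane) inserted through the stream function, and the passage $k\to\infty$ uses only integrability of the \emph{limit} quantities, in particular $(\nabla d)^\top\Delta d\in L^2_tL^1_x$ and $\nabla u\in L^2_{t,x}$. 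In other words, the ``cancellation'' is that the axis is a removable set for the limiting weak formulation when tested with poloidal fields, not that the defect tensor is shown to be zero or to annihilate all admissible test fields through its algebraic structure. Since you yourself note that genuine axisymmetric bubbles exist on the axis, the stronger claim $\nu\equiv 0$ is exactly the part one should not expect to get for free, and without it your identification of the nonlinear terms via strong $L^2_{\mathrm{loc}}$ convergence of $\nabla d_\varepsilon$ collapses.

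A secondary but real gap is in the off-axis blow-up. Your key idea -- that the no-swirl ansatz forces any bubble to take values in a great circle, and finite-energy harmonic maps $\R^2\to\Sp^1$ are constant -- is precisely the paper's new observation, but at the Ginzburg--Landau level the blow-up limit is a harmonic map into $\Sp^1$ only in the regime $\lambda_\varepsilon/\varepsilon\to\infty$. One must also treat $\lambda_\varepsilon/\varepsilon\to\gamma\in(0,\infty)$, where the limit is a nonconstant entire solution of $-\Delta w=\gamma^2(1-|w|^2)w$ ruled out by a Pohozaev argument (as in Lin--Wang), and $\lambda_\varepsilon/\varepsilon\to 0$, where the limit is an entire harmonic function of finite Dirichlet energy; your assertion that ``Struwe-type bubbling produces a harmonic map into $\Sp^2$'' skips these cases, since $|d_\varepsilon|$ need not tend to $1$ at the blow-up scale. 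Relatedly, the uniform ``smooth bounds and classical convergence'' you invoke off the concentration set are stronger than what the coupled system with a Leray--Hopf velocity provides; the paper works at good time slices with an $L^2$-bounded tension field and only obtains $H^1_{\rm loc}$ convergence there, which suffices.
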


As a byproduct of the arguments to prove Theorem \ref{thm:weakExistence},
we will establish a weak compactness property of the axisymmetric with no swirl solutions of \eqref{momentum}--\eqref{boundaryData}. More precisely, we have
\begin{theorem} \label{thm:weakCompactness} For $0<T<\infty$, let
$u_n=u_n^r\EE_r+u^z_n\EE_z, d_n=\sin\phi_n \EE_r+\cos\phi_n\EE_z:
\Omega\times (0,T)\to \R^3\times\mathbb S^2$ be a sequence of axisymmetric, without swirl, weak solutions of \eqref{momentum}--\eqref{director}, subject to the initial value $(u_{0n}, d_{0n})=(u_{0n}^r\EE_r
+u_{0n}^z \EE_z, d_{0n}^r\EE_r
+d_{0n}^z \EE_z)\in L^2_{\rm{div}}(\Omega)\times H^1(\Omega,\mathbb S^2)$ and the boundary value  $(u_n, d_n)=(0, d_{0n})$ on $\partial\Omega$. 
Suppose $(u_{0n},d_{0n})\rightarrow (u_0, d_0)$ in $L^2(\Omega)\times H^1(\Omega,\mathbb S^2)$, and
$(u_n, d_n)\rightharpoonup (u,d)$ in $L^2_tL^2_x\times L^2_tH^1_x(\Omega\times [0,T])$. Then $(u,d)$ is an axisymmetric without swirl weak solution 
of \eqref{momentum}--\eqref{director} with initial-boundary value
$(u_0, d_0)$.
\end{theorem}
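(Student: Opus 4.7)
The plan is to extract compactness from the uniform energy estimates satisfied by each exact weak solution $(u_n,d_n)$ and then identify the weak limit $(u,d)$ as a solution by passing to the limit in the weak formulation. Since every $(u_n,d_n)$ satisfies the energy inequality \eqref{energyInequality} with uniformly bounded initial energy, we obtain uniform bounds: $u_n$ in $L^\infty_tL^2_x\cap L^2_tH^1_{0,x}$, $d_n$ in $L^\infty_tH^1_x$ with $|d_n|\equiv 1$, and the tension field $\Delta d_n+|\nabla d_n|^2 d_n$ in $L^2_{t,x}$. From \eqref{director} this yields $\partial_t d_n$ bounded in $L^2_tL^p_x$ for some $p>1$, and from \eqref{momentum} a bound on $\partial_t u_n$ in a suitable negative Sobolev space. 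Axisymmetry without swirl, being the linear condition $Q^\top\circ(u,d)\circ Q=(u,d)$ together with $u^\theta\equiv d^\theta\equiv 0$, passes trivially to weak limits.

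By Aubin--Lions one upgrades the weak convergence to strong convergence $u_n\to u$ in $L^2_{t,x}$ and $d_n\to d$ in $L^2_{t,x}$. This suffices to pass to the limit in $u_n\otimes u_n$ and in $(u_n\cdot\nabla)d_n$, the latter by pairing strong $u_n$-convergence with weak $\nabla d_n$-convergence. The genuine difficulty is the quadratic gradient terms $\nabla d_n\odot\nabla d_n$ in the Ericksen stress and $|\nabla d_n|^2 d_n$ in the director equation: since $\nabla d_n$ only converges weakly in $L^2_{t,x}$, we a priori have $\nabla d_n\odot\nabla d_n\weaks\nabla d\odot\nabla d+\nu$ as Radon measures with a defect measure $\nu\ge 0$, and similarly $|\nabla d_n|^2\weaks|\nabla d|^2+\tr\nu$. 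The heart of the proof is to show $\nu\equiv 0$.

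This is where the axisymmetric-no-swirl structure is decisive, and the strategy is to invoke exactly the two concentration-cancellation arguments already developed to prove Theorem \ref{thm:weakExistence}. For a putative concentration point lying off the axis $\ZZ$, one performs a parabolic blow-up; using small-energy $\eps$-regularity for the coupled system away from $\ZZ$ (available because the cylindrical weight $r$ is bounded below on such neighborhoods), the blow-up limit is a smooth nontrivial harmonic map with the inherited axial symmetry. Since $d=\sin\varphi\,\EE_r+\cos\varphi\,\EE_z$ takes values in a single meridian great circle, the blow-up limit factors through $\mathbb S^1\subset\mathbb S^2$, and the resulting topological restriction forces it to be trivial, contradicting the assumption of concentration. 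For concentration on $\ZZ$, this argument breaks down because the moving frame $(\EE_r,\EE_\theta)$ degenerates at $r=0$; instead one exploits the cancellation that the factor $r^{-2}\sin 2\varphi$ in \eqref{momentumRadial}--\eqref{HMF} contributes to the Ericksen stress at the origin, so that the concentrated part of $\nabla d_n\odot\nabla d_n$ pairs to zero against divergence-free axisymmetric test fields.

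Once the gradient defect measure is excluded, $\nabla d_n\to\nabla d$ strongly in $L^2_{\mathrm{loc}}(\Omega\times(0,T))$, and the limit passage in \eqref{weakMomentum}--\eqref{weakDirector} is immediate; strong convergence of $|\nabla d_n|^2 d_n$ and $\nabla d_n\odot\nabla d_n$ follows, and the constraint $|d|=1$ is preserved by the a.e.\ convergence of $d_n$. Strong convergence of the initial data together with the bound $\partial_t d_n\in L^2_tL^p_x$ and an Aubin-type argument for $u_n$ give attainment of $(u_0,d_0)$ in the sense of Definition \ref{defWeakSolution}, while the $H^{3/2}(\partial\Omega)$ assumption on $d_0$ propagates the Dirichlet boundary data to the limit. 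The energy inequality \eqref{energyInequality} for $(u,d)$ then follows by weak lower semicontinuity of all $L^2$ norms on the left-hand side combined with strong convergence of the initial energy on the right-hand side. The main obstacle, as in Theorem \ref{thm:weakExistence}, is the exclusion of the gradient defect measure; essentially the entire analytical content of the paper is reused in that single step, and it is precisely the axisymmetry without swirl that makes both the off-axis blow-up and the on-axis cancellation available.
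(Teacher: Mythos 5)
Your overall strategy is the paper's: uniform bounds from the energy inequality, Aubin--Lions for the lower-order terms, an off-axis blow-up to rule out concentration of $|\nabla d_n|^2$ away from $\ZZ$ (the blow-up limit being, after lifting the meridian-circle-valued map to its angle, a finite-energy harmonic function on $\R^2$, hence constant), and the on-axis concentration-cancellation with poloidal test fields. The paper implements the off-axis part at fixed ``good'' time slices selected by Fatou's lemma from the $L^2$ bound on the tension field $\tau_n=\partial_t d_n+u_n\cdot\nabla d_n$, using the almost-monotonicity formula for approximated harmonic maps; this is in fact simpler than in the Ginzburg--Landau setting because $|d_n|\equiv1$, so there is no modulus variable to control.

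There is, however, a genuine slip in your concluding step. After correctly observing that on $\ZZ$ one only gets a \emph{cancellation} (the concentrated part of the Ericksen stress pairs to zero against admissible test fields), you then assert that ``once the gradient defect measure is excluded, $\nabla d_n\to\nabla d$ strongly in $L^2_{\rm loc}(\Omega\times(0,T))$'' and that strong convergence of $\nabla d_n\odot\nabla d_n$ and $|\nabla d_n|^2d_n$ follows. This is not available and is not what the argument delivers: the defect measure is excluded only off the axis, and neither the paper nor your own on-axis argument shows $\nu=0$ on $\ZZ$, so strong $H^1_x$-convergence across the axis cannot be claimed. The limit passage must instead be organized as in Sections \ref{subsection:RadialWeakFormulation}--\ref{subsection:ProofThm}: first verify \eqref{weakMomentum} for solenoidal no-swirl test functions whose stream-function representation forces them to vanish on $\ZZ$ (here off-axis strong convergence at a.e.\ time suffices), and then remove this restriction \emph{at the level of the limit equation} by the capacity cut-off of Lemma \ref{lemmaCutOffFunctions}, which uses only the integrability $(\nabla d)^\top\Delta d\in L^2_tL^1_x$ of the limit together with the poloidal structure and Remark \ref{remark:Concentration} (the $\EE_r$-component concentration is annihilated by the angular average of $\EE_r$). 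With that reorganization your argument matches the paper's proof; as written, the step claiming global strong gradient convergence would fail.
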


We would like to sketch the main ideas behind the proof of Theorem \ref{thm:weakExistence}. For the approximation of solutions, we rely on the already mentioned Ginzburg-Landau approximation (see \eqref{ginzburgLandau} below). As $\eps$ tends to zero, the main challenge is that the sequence of Ericksen stress tensors $(\nabla d_\varepsilon\odot \nabla d_\varepsilon-\frac12|\nabla d_\varepsilon|^2\mathbb I_3)$ on the right-hand side of the momentum equation $\eqref{ginzburgLandau}_1$ may not converge weakly in $L^1$. In fact, by the energy law \eqref{energyLawGL}, we are only guaranteed weak convergence as Radon measures for Ericksen stress tensors. To overcome this difficulty in the axisymmetric setting,  we consider two different cases: 
\begin{enumerate}
\item Away from the $z$-axis $\ZZ$, equation \eqref{HMF} already indicates that better regularity is available for the angle function $\varphi$. The situation is more complicated for the approximation. Still, using a blow-up argument, we are able to rule out any concentration of $(e(d_\eps))_\eps$. While some basic properties of the blow-up scheme developed by \cite{lin1999} and \cite{lin2016} play an important role,
we make the crucial new observation that any generic blow-up limit near a possible concentration point off the $z$-axis {\it would be} a nontrivial harmonic map from $\mathbb R^2$ to $\mathbb S^1$ with finite Dirichlet energy, which is impossible. In principle, this is due to  topological restrictions for the blow-up away from the $z$-axis. The details can be found in Subsection \ref{subsection:noConcentrationAwayZ}. 
\item
Right at the $z$-axis $\ZZ$, it is possible that the energy density might concentrate to a limit measure supported on $\ZZ$. To handle this, we employ the idea of concentration-cancellation developed in \cite{kortum2020}. This kind of ideas was first developed by DiPerna-Majda \cite{diperna1988} and Delort \cite{delort1991} in their studies of the Euler equations, see also \cite{majda2002}.  Although the convergence cannot be improved, the ultimate goal is to verify Definition \eqref{defWeakSolution} for the limit quantities. It turns out to be very helpful to use  divergence-free test functions without swirl. Compared to \cite{kortum2020} where the two-dimensional case is considered, the three-dimensional no-swirl functions reduce to  poloidal functions (see e.g.\ \cite{liu2009,schmitt1992}). This fact is discussed in Section \ref{subsection:RadialWeakFormulation} (c.f.\ Remark \ref{remark:Concentration}). The $\EE_r$-component of the right-hand side in \eqref{momentum} might concentrate, but the action of limiting measure on poloidal test functions is zero. For the $\EE_z$-component, we perform a cancellation analysis similar to \cite{evans1990,frehse1982}. Just dealing with poloidal functions and the specific axisymmetry enables us to do so. The conclusion of the argument takes place in Section \ref{subsection:ProofThm}.
\end{enumerate}

The paper is organized as follows. In Section two, we will present the proof of Theorem \ref{thm:weakExistence}. The proof of Theorem \ref{thm:weakCompactness} will be given in Section three.

\section{Convergence of the Ginzburg-Landau approximation}  \label{section:GL}

The following section is devoted to the proof of Theorem \ref{thm:weakExistence}. In order to do so, we consider  the Ginzburg-Landau system for $(u_\varepsilon, P_\varepsilon, d_\varepsilon):\Omega\times [0,\infty)\to 
\mathbb R^3\times\mathbb R\times\mathbb S^2$:
\begin{align}
	\begin{cases}
	  \displaystyle \pa_t u_\eps + (u_\eps \cdot \nabla ) u_\eps - \Delta u_\eps + \nabla P_\eps  = - \div \left( \nabla d_\eps \odot \nabla d_\eps - \frac{|\nabla d_\eps|^2}{2} \mathbb{I}_3 \right), \\
		\displaystyle \div u_{\eps} =0, \\
	 \displaystyle \pa_t d_\eps + (u_\eps \cdot \nabla) d_\eps - \Delta d_\eps = \frac{1 - |d_\eps|^2}{\eps^2} d_\eps
	\end{cases}
	\label{ginzburgLandau}
\end{align}
for $\eps \to 0^+$. The system  is accompanied by the initial and boundary conditions
\begin{align}
	& \begin{cases} (u_\eps(x,0),d_\eps(x,0)) 	 =  (u_{0}(x), d_{0}(x)),
	& x \in \Omega, \\
	  \div  u_{0} = 0, 	& x \in \Omega,   
	\end{cases} \label{initialDataGL} \\
 	& \begin{cases} 	\displaystyle u_{\eps}(x,t) =0, & x \in \pa \Omega,~ t>0, \\
	 \displaystyle d_\eps (x,t) = d_{0}(x), & x \in \pa \Omega,~ t>0,
	\end{cases} \label{boundaryDataGL}
\end{align}
for a given set of initial data $(u_0,d_0) \in L^2_{\div}(\Omega) \times H^{1}(\Omega, \Sp^2)$, 
that is assumed to be axisymmetric with no swirl.

Existence of global Leray-Hopf type weak solutions to the above system for a fixed $\eps>0$ has been showed in \cite{lin1995}. It turns out that, thanks to the preservation of the axisymmetricity without swirl by these equations, the construction given by \cite{lin1995} can be modified to obtain a global weak solution
of \eqref{ginzburgLandau}, 
\eqref{initialDataGL} and \eqref{boundaryDataGL},
that is axisymmetric without swirl.
Namely, 
$$u_\eps (r,
\theta,z,t) = u^r_\eps (r,z,t) \EE_r + u^z_\eps (r,z,t) \EE_z, \quad d_\eps (r,\theta, z, t) = d^r_\eps (r,z, t) \EE_r + d^z_\eps (r,z, t) \EE_z$$ for $t>0$. We will provide a sketch of the construction in the appendix below.

 For a given $(u_0,d_0) \in L^2_{\div} \times W^{1,2}(\Omega, \Sp^2)$, the geometry poses some restrictions on the approximability w.r.t.\ to $\eps$ (c.f.\ \cite{bethuel1991}). However, we may take sequences of smooth functions such that
\begin{align}
  (u_{0\eps} , d_{0\eps}) \quad \to  \quad (u_0, d_0)  & \quad \text{ strongly in } L^2_{\div} \times W^{1,2}(\Omega, \R^3)  \label{wellpreparedData} 
 \end{align}
 with $d_{0\eps}|_{r=0} =0$ such that the energy density $(e(d_\eps))_\eps$ strongly converges as well.  
Observe that for a given set of initial data $(u_0,d_0) \in L^2_{\div}(\Omega) \times H^{1}(\Omega, \Sp^2)$ in \eqref{iniData}, that is axisymmetric with no swirl,   
we can take sequences of smooth functions $(u_{0\eps}^r, u_{0\eps}^z)(r,z)\in C^\infty(\mathbb D,\mathbb R^2)$
and $(d_{0\eps}^r, d_{0\eps}^z)(r,z)\in C^\infty(\mathbb D,\mathbb R^2)$, where 
$\mathbb D=\Omega\cap\{\theta=0\}\subset\mathbb R^2$,
such that 
$u_{0\eps}=u_{0\eps}^r\EE_r+u_{0\eps}^z\EE_z$ is divergence free in $\Omega$, and
\begin{equation} \label{wellpreparedData1}
u_{0\eps}\rightarrow u_0 \quad  \text{ strongly in } L^2(\Omega,\mathbb R^3),
\end{equation}
and
\begin{align}
\label{wellpreparedData2}
d_{0\eps}= d_{0\eps}^r\EE_r+d_{0\eps}^z\EE_z
\rightarrow d_0 \quad \text{ strongly in } H^1(\Omega, \mathbb R^3).
\end{align}  
The corresponding axisymmetric solutions $(u_\eps, d_\eps, P_\eps)$ of \eqref{ginzburgLandau}, subject to the initial and boundary condition \eqref{initialDataGL} and \eqref{boundaryDataGL}, satisfy the following energy inequality:
\begin{align}
	\int_\Omega \left[\frac12|u_\eps|^2+e_\eps (d_\eps)\right](\cdot,t)\,\d x &+ 
	\int_0^t \int_\Omega \left[ |\nabla u_\eps |^2 + \left| \pa_t d_\eps + (u_\eps \cdot \nabla ) d_\eps \right|^2 \right] \,\d x \d t\nonumber\\
 &\leq  \int_\Omega \left[\frac12|u_{0}|^2+\frac12 |\nabla d_{0}|^2\right]\d x,   \label{energyLawGL}
\end{align}
for every $t>0$. Here, 
$$e_\eps (d_\eps) := \frac{|\nabla d_\eps|^2}{2}+ \frac{(1-|d_\eps|^2)^2}{4\eps^2}$$ is the Ginzburg-Landau energy density function for $d_\eps$. We refer readers to Chen-Struwe \cite{chen1989b} for the application of this kind of Ginzburg-Landau approximation scheme in the context of heat flow of harmonic maps.

\medskip
In the following, we prove that the limit of solutions (sub-)converges to a weak solution of \eqref{momentum}--\eqref{director} with preserved axisymmetry with no swirl. In order to cope with the most problematic term, the right-hand side of \eqref{ginzburgLandau}, we first  rule out possible energy concentrations \text{away} from $\ZZ$, that ensures $L^1$-convergence away from $\ZZ$ for the Ericksen energy stress tensors,  as $\eps \to 0^+$ at ``good" time slices. In a second step, we realize that a possible concentration of Ginzburg-Landau energy at the $z$-axis $\ZZ$ can be bypassed in the limit process as $\eps \to 0$ due to the axisymmetry and a concentration-cancellation technique. Throughout the proof, we refer several times to \cite{lin2016} for preliminary considerations used in this paper.

\subsection{No concentration away from the $z$-axis}
\label{subsection:noConcentrationAwayZ}
Using the cylindrical symmetry of $d_\eps$, we decompose the approximated director field  $d_\eps = \rho_\eps \cdot \frac{d_\eps}{\rho_\eps}$, where $\rho_\eps=|d_\eps|$, and more precisely 
$$d_\varepsilon(r,\theta, z,t)=\rho_\eps(r,z,t)\left[\sin\phi_\eps(r,z,t) \EE_r+\cos\phi_\eps(r,z,t) \EE_z\right],
\ (r,\theta,z)\in\Omega,\  t>0.$$ 
The density satisfies $0\leq \rho_\eps \leq 1$ by a maximum principle (see e.g.\ \cite[Lemma 2.1]{lin2016} or \cite{bethuel1994}). Hence from \eqref{ginzburgLandau}$_3$ the functions  $(\rho_\varepsilon, \phi_\varepsilon)$ solve
\begin{equation}\label{axisym1}
\begin{cases}\displaystyle
\partial_t\rho_\varepsilon+ (u_\eps \cdot \nabla ) \rho_\eps 
-L\rho_\varepsilon+\rho_\varepsilon \left(|\nabla\phi_\varepsilon|^2+\frac{\sin^2\phi_\varepsilon}{r^2}\right)=\frac{1-\rho_\varepsilon^2}{\varepsilon^2}\rho_\varepsilon,\\
\displaystyle\partial_t\phi_\varepsilon+ (u_\eps \cdot \nabla ) \phi_\eps 
-L\phi_\varepsilon+\frac{\sin\phi_\varepsilon\cos\phi_\varepsilon}{r^2}-2\frac{\nabla\rho_\varepsilon\cdot\nabla\phi_\varepsilon}{\rho_\varepsilon}=0,
\end{cases}
\end{equation}
where $\displaystyle L=\partial^2_r+\frac{1}{r}\partial_r+\partial^2_z$.

The goal is to show that for any  ``good" time slice $t_0>0$, the energy concentration set $\Sigma_{t_0}$ for $e_\eps(d_\eps)$ satisfies $\Sigma_{t_0}\subset\ZZ$. Here we follow some of the techniques developed by \cite{du2022,kortum2020,lin2016} where one considers a time $t_0>0$ such that the dissipation rate in \eqref{energyLawGL} is finite.
Similar to Lin-Wang \cite{lin2016}, according to Fatou's lemma, one has that for any $0<T<\infty$,
$$
\int_0^T\liminf_{\varepsilon\to 0}\int_\Omega |\partial_t d_\varepsilon+u_\varepsilon\cdot\nabla d_\varepsilon|^2(x,t)\d x \d t<+\infty.
$$

Next we consider the subset of good $\Lambda$-time slices. For any positive constant $\Lambda>0$, define $\mathcal{G}_\Lambda^T\subset (0,T)$ by letting
\begin{equation}\label{goodtime}
\mathcal{G}_\Lambda^T=\Big\{t_0\in (0, T) \ | \ 
\Lambda(t_0):=\liminf_{\varepsilon\to 0} \int_\Omega |\tau_\varepsilon(t_0)|^2\,dx\le \Lambda\Big\},
\end{equation}
where
$$
\tau_\varepsilon(t_0)(\cdot)=(\partial_t d_\varepsilon+u_\varepsilon\cdot\nabla d_\varepsilon)(\cdot, t_0).
$$
Set $\mathcal{B}_\Lambda^T=(0,T)\setminus \mathcal{G}_\Lambda^T$.
Then it is easy to see that
\begin{equation}\label{sizeofbadtime}
    \big|\mathcal{B}_\Lambda^T\big|\le \frac{E_0}{\Lambda},
\end{equation}
where 
$$E_0=\sup_{0<\varepsilon\le 1} \int_\Omega \left(\frac12 |u_{0\eps}|^2+e_\eps(d_{0\eps})\right)\,\d x.$$
From Lin-Wang \cite{lin2016}, we additionally have
\begin{theorem}    \label{thm:epsilonCompactness}
Let $(u_\eps,d_\eps)$ be a solution (not necessarily axisymmetric) to \eqref{ginzburgLandau} satisfying the energy inequality \eqref{energyLawGL}. For any $\Lambda>0$, let $t_0\in\mathcal{G}_\Lambda^T$. Then there exist $\varepsilon_0>0$ and $r_0>0$ such that if we define the concentration set for $d_\eps$
$$
\Sigma_{t_0}: =\bigcap_{0<r<r_0}\Big\{x_0\in\Omega: \liminf_{\varepsilon\to 0} r^{-1}\int_{B_r(x_0)} e_\varepsilon(d_\varepsilon)(x,t_0)>\varepsilon_0^2\Big\},
$$
then it holds 
\begin{itemize}
\item [i)]
$\Sigma_{t_0}$ is a closed subset with $H^1(\Sigma_{t_0}\cap K)<+\infty$ for any compact subset $K\subset\Omega$, and
\item [ii)] there exists an approximated harmonic map $d:\Omega\to\mathbb S^2$ with tension field
$\displaystyle\tau=\lim_{\varepsilon\to 0}\tau_{\varepsilon}(\cdot, t_0)$ weakly in $L^2(\Omega)$ such that, after passing to a subsequence, 
$$
d_\varepsilon(\cdot,t_0)\rightarrow d \ \ {\rm{in}}\ \ H^1_{\rm{loc}}(\Omega\setminus\Sigma_{t_0}),
$$
and
$$
e_\varepsilon(d_\varepsilon)(\cdot,t_0)\,dx\rightharpoonup \frac12|\nabla d|^2\,dx
$$ 
as convergence of Radon measures on $\Omega\setminus\Sigma_{t_0}$. 
\end{itemize}
\end{theorem}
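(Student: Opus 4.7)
The plan is to follow the $\eps$-regularity approach of Lin-Wang \cite{lin2016}, adapted to a good time slice. Fix $t_0 \in \mathcal{G}_\Lambda^T$ and select a subsequence (not relabeled) realizing the $\liminf$ in \eqref{goodtime}, so that $\|\tau_\eps(\cdot,t_0)\|_{L^2(\Omega)}^2 \le 2\Lambda$. At this time slice the third equation in \eqref{ginzburgLandau} reduces to the stationary Ginzburg-Landau equation
\begin{equation*}
\Delta d_\eps + \frac{1-|d_\eps|^2}{\eps^2} d_\eps = \tau_\eps(\cdot, t_0),
\end{equation*}
while $\int_\Omega e_\eps(d_\eps)(\cdot,t_0) \, \d x \le E_0$ by \eqref{energyLawGL}. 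These two inputs---a uniform energy bound plus an $L^2$ tension field---are the standard ingredients to set up an $\eps$-regularity theory.

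The core technical step, and the main obstacle, is a small-energy regularity lemma: there exist $\eps_0, r_0 > 0$, depending only on $E_0$ and $\Lambda$, such that whenever $B_{2r}(x_0) \subset \Omega$ with $0 < r < r_0$ and
\begin{equation*}
r^{-1}\int_{B_r(x_0)} e_\eps(d_\eps)(\cdot, t_0) \, \d x \le \eps_0^2,
\end{equation*}
one has uniform $H^2(B_{r/2}(x_0))$-bounds for $d_\eps(\cdot, t_0)$ independent of $\eps \in (0,1)$. I plan to derive a Bochner-type differential inequality showing that $e_\eps(d_\eps)$ is a subsolution of a linear elliptic equation whose forcing is controlled by $|\tau_\eps|^2$ and lower-order quadratic terms, then run a Moser iteration to convert the scaled smallness into a local $L^\infty$ bound on $e_\eps(d_\eps)$; standard elliptic regularity applied to the stationary equation above then upgrades this to the $H^2$ estimate. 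The delicate point is absorbing the $L^2$-only tension field and ensuring that the singular potential $\eps^{-2}(1-|d_\eps|^2)d_\eps$ does not spoil the iteration---one exploits that the energy bound forces $\eps^{-2}(1-|d_\eps|^2)^2$ to be bounded in $L^1$.

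Given the lemma, both claims follow by now-standard arguments. For part (i), $\Omega \setminus \Sigma_{t_0}$ is open: any $x_0 \notin \Sigma_{t_0}$ witnesses the smallness of the scaled energy at some scale $r < r_0$, a property inherited by points sufficiently close to $x_0$ for all small $\eps$; hence $\Sigma_{t_0}$ is closed. The bound $\mathcal{H}^1(\Sigma_{t_0}\cap K) < \infty$ follows from a Vitali-type covering: at each $x \in \Sigma_{t_0}\cap K$, pick a radius $r_x < r_0$ with $\liminf_{\eps \to 0} r_x^{-1} \int_{B_{r_x}(x)} e_\eps(d_\eps)(\cdot,t_0) > \eps_0^2$, extract a disjoint subfamily $\{B_{r_i}(x_i)\}$ whose five-fold enlargements cover $\Sigma_{t_0}\cap K$, and sum: $\sum_i r_i \le \eps_0^{-2} E_0$, so $\mathcal{H}^1(\Sigma_{t_0}\cap K) \le 5 \eps_0^{-2} E_0$.

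For part (ii), cover $\Omega \setminus \Sigma_{t_0}$ by balls on which the $\eps$-regularity lemma applies uniformly. The resulting uniform $H^2_{\loc}$ bounds for $d_\eps(\cdot,t_0)$, combined with $\int_\Omega \eps^{-2}(1-|d_\eps|^2)^2 \, \d x \le 4E_0$, yield---after passing to a subsequence---a strong $H^1_{\loc}$ limit $d : \Omega\setminus\Sigma_{t_0} \to \Sp^2$ with $|d|=1$. Passing to the weak $L^2$ limit $\tau_\eps \weak \tau$ in the stationary equation produces $\Delta d + |\nabla d|^2 d = \tau$ distributionally on $\Omega\setminus\Sigma_{t_0}$, so $d$ is an approximated harmonic map with tension field $\tau$. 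The strong $H^1_{\loc}$ convergence immediately yields the claimed Radon-measure convergence $e_\eps(d_\eps)(\cdot,t_0)\, \d x \weak \tfrac12|\nabla d|^2 \, \d x$ on $\Omega\setminus\Sigma_{t_0}$, completing the proof.
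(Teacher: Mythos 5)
The paper does not prove this statement at all: Theorem \ref{thm:epsilonCompactness} is imported verbatim from Lin--Wang \cite{lin2016}, so the only fair comparison is with the proof there, whose overall architecture (small-energy regularity at a good time slice, closedness and a Vitali covering for the $\mathcal{H}^1$-bound, local compactness away from $\Sigma_{t_0}$) your outline reproduces correctly. Parts (i) and (ii) of your argument are fine \emph{modulo} the $\varepsilon$-regularity lemma, and you rightly identify that lemma as the crux.

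However, the mechanism you propose for that lemma has a genuine gap. Writing the equation at time $t_0$ as $\Delta d_\varepsilon + \varepsilon^{-2}(1-|d_\varepsilon|^2)d_\varepsilon = \tau_\varepsilon$ with $\tau_\varepsilon$ only in $L^2$, the Bochner computation for $e_\varepsilon(d_\varepsilon)$ produces forcing terms of the form $|\tau_\varepsilon|^2$ and $\mathrm{div}(\tau_\varepsilon\otimes\nabla d_\varepsilon)$, which are merely in $L^1$ (resp.\ have an $L^1$ flux). In dimension three, Moser/De Giorgi iteration for subsolutions requires the zeroth-order forcing in $L^q$ with $q>3/2$ and the divergence-form part in $L^p$ with $p>3$, so the iteration does not close with $L^2$ tension; this is precisely why \cite{lin2016} does not argue this way. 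Moreover, your hypothesis is smallness of $r^{-1}\int_{B_r}e_\varepsilon$ at a \emph{single} scale; to run any decay or blow-up scheme you must first propagate this smallness to all smaller scales, which requires the almost-monotonicity formula for the scaled Ginzburg--Landau energy with $L^2$ tension (Lin--Wang's Lemma 3.1, which the present paper itself invokes in \eqref{total_energy}). The actual proof in \cite{lin2016} combines this monotonicity with a compactness/decay argument to obtain the local estimates, rather than a direct $L^\infty$ bound on $e_\varepsilon$. Finally, a smaller point: the Radon-measure convergence $e_\varepsilon(d_\varepsilon)\,dx\rightharpoonup \frac12|\nabla d|^2\,dx$ does not follow from strong $H^1_{\rm loc}$ convergence alone; you must also show that the potential term $\varepsilon^{-2}(1-|d_\varepsilon|^2)^2$ vanishes in $L^1_{\rm loc}$ away from $\Sigma_{t_0}$, which uses the local $L^2$ bound on $\varepsilon^{-2}(1-|d_\varepsilon|^2)$ coming from the equation and the local regularity estimates, not just the global $O(1)$ energy bound.
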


In the following we will show 
\begin{theorem} \label{offZaxis}Under the same notations as above. For any $\Lambda>0$ and $t_0\in \mathcal{G}_\Lambda^T$, it holds that
\begin{equation}\label{offZaxis1}
  \Sigma_{t_0}\setminus\ZZ=\emptyset.  
\end{equation}  
\end{theorem}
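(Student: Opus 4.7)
The strategy is to argue by contradiction, exploiting the axisymmetric ansatz to reduce the problem to a 2D blow-up for the scalar angle function $\phi_\varepsilon$. Suppose for contradiction that there exists $x_0\in\Sigma_{t_0}\setminus\ZZ$ with cylindrical coordinates $(r_0,\theta_0,z_0)$ and $r_0:=\dist(x_0,\ZZ)>0$. On a 2D slab $D_\rho(r_0,z_0)\subset \mathbb{D}:=\Omega\cap\{\theta=0\}$ with $\rho\ll r_0$, the axisymmetric measure weight $r$ and the singular coefficient $1/r^2$ in \eqref{axisym1} are essentially constant, so \eqref{axisym1} behaves as a regularly perturbed 2D system for $(\phi_\varepsilon,\rho_\varepsilon)$ coupled with the axisymmetric velocity.

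The 3D concentration at $x_0$ transfers, via $dx=r\,dr\,d\theta\,dz$, into a 2D concentration of the density $|\nabla_{r,z}\phi_\varepsilon|^2+(1-\rho_\varepsilon^2)^2/\varepsilon^2$ near $(r_0,z_0)$. Following the blow-up scheme of Lin~\cite{lin1999} and Lin-Wang~\cite{lin2016}, and exploiting the good time-slice hypothesis $t_0\in\mathcal{G}_\Lambda^T$ (which bounds $\tau_\varepsilon(t_0)$ in $L^2$), I would choose centers $(r_\varepsilon^*,z_\varepsilon^*)\to(r_0,z_0)$ and concentration scales $\lambda_\varepsilon\searrow 0$ (typically the smallest radius at which a fixed fraction of the $\varepsilon_0^2$-threshold is attained), and set
\begin{equation*}
\tilde\phi_\varepsilon(y):=\phi_\varepsilon(r_\varepsilon^*+\lambda_\varepsilon y_1,\, z_\varepsilon^*+\lambda_\varepsilon y_3,\, t_0),\qquad \tilde\rho_\varepsilon(y):=\rho_\varepsilon(\ldots),\qquad \tilde\varepsilon:=\varepsilon/\lambda_\varepsilon.
\end{equation*}
The decisive scaling observation is that the 2D Dirichlet energy $\int|\nabla_{r,z}\phi_\varepsilon|^2\,dr\,dz$ is conformally invariant under this rescaling, while $\sin^2\phi_\varepsilon/r^2$ is absorbed by the vanishing factor $\lambda_\varepsilon^2/r_0^2$, the velocity transport $u_\varepsilon\cdot\nabla\phi_\varepsilon$ is subcritical (using the energy bound on $u_\varepsilon$), and the penalization acts only at scale $\tilde\varepsilon$.

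Standard $\varepsilon$-regularity then yields, after passing to a subsequence, strong convergence $\tilde\rho_\varepsilon\to 1$ and $\tilde\phi_\varepsilon\to\phi_\infty$ in $H^1_\loc(\R^2)$, where $\phi_\infty:\R^2\to\R$ is a nonconstant harmonic function (nontriviality being ensured by the choice of $\lambda_\varepsilon$). Moreover, since the 2D Dirichlet energy on the relevant slab is controlled by the total 3D energy, one obtains
\begin{equation*}
\int_{\R^2}|\nabla\phi_\infty|^2\,dy \;\le\; \liminf_{\varepsilon\to 0}\int_{\mathbb{D}\cap\{r\ge r_0/2\}}|\nabla_{r,z}\phi_\varepsilon|^2\,dr\,dz \;\le\; \frac{2E_0}{\pi r_0}\;<\;\infty,
\end{equation*}
so $\phi_\infty$ has finite global Dirichlet energy on $\R^2$. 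The associated director $d_\infty=\sin\phi_\infty\,\EE_r(\theta_0)+\cos\phi_\infty\,\EE_z$ is a nonconstant harmonic map from $\R^2$ into the great circle $\mathrm{span}(\EE_r(\theta_0),\EE_z)\cap\Sp^2\cong\Sp^1$ with finite Dirichlet energy.

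Such a map is ruled out by a Liouville argument: each partial derivative $\partial_i\phi_\infty$ is itself harmonic and belongs to $L^2(\R^2)$, so the mean-value inequality $|\partial_i\phi_\infty(x)|^2\le (\pi R^2)^{-1}\|\nabla\phi_\infty\|_{L^2}^2 \to 0$ as $R\to\infty$ forces $\nabla\phi_\infty\equiv 0$. Hence $\phi_\infty$ is constant, contradicting the nontriviality of the blow-up, and therefore $\Sigma_{t_0}\setminus\ZZ=\emptyset$. I expect the principal difficulty to lie in rigorously implementing this blow-up for the coupled Ginzburg-Landau--Navier-Stokes system, in particular controlling the rescaled velocity and density $(\tilde u_\varepsilon,\tilde\rho_\varepsilon)$ and selecting $\lambda_\varepsilon$ sharply enough that the limit is simultaneously nontrivial (nonzero energy on $B_1$) and non-escaping (energy does not leak to $|y|=\infty$), which is handled by the usual concentration-scale selection principle.
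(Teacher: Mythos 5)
Your overall strategy is the one the paper follows in its main case: blow up at a well-chosen scale $\lambda_\eps$ around centers converging to the concentration point, use the good-slice bound on $\tau_\eps(t_0)$, observe that off the $z$-axis the weight $r$ and the potential $\sin^2\phi_\eps/r^2$ become harmless after rescaling, and conclude that the limit would be a nontrivial finite-energy harmonic function (equivalently a harmonic map into $\Sp^1$), which Liouville forbids. However, there is a genuine gap: your argument silently assumes that $\tilde\eps=\eps/\lambda_\eps\to 0$, i.e.\ $\lambda_\eps/\eps\to+\infty$, when you assert ``strong convergence $\tilde\rho_\eps\to 1$'' and reduce everything to the scalar angle $\phi_\infty$. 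The concentration-scale selection gives no control on the ratio $\lambda_\eps/\eps$, and the paper must split into three regimes \eqref{limitCases}. If $\lambda_\eps/\eps\to\gamma\in(0,\infty)$, the rescaled density does \emph{not} tend to $1$; the blow-up limit is a nontrivial entire solution of $-\Delta w+\gamma^2(1-|w|^2)w=0$ with finite Ginzburg--Landau energy, and this is not excluded by any Liouville theorem for harmonic functions --- the paper rules it out by a Pohozaev-type identity following \cite{lin1999}. (The regime $\lambda_\eps/\eps\to 0$ again gives a vector-valued harmonic limit and is handled by Liouville, but not through the angle reduction.) Without the trichotomy and the Pohozaev step your contradiction does not cover all possible blow-up scales.

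A secondary, more technical point: even in the regime $\lambda_\eps/\eps\to+\infty$, ``standard $\eps$-regularity'' does not hand you strong $H^1_{\rm loc}$ convergence of $(\tilde\rho_\eps,\tilde\phi_\eps)$ together with the vanishing of the rescaled potential energy $\frac{\lambda_\eps^2}{\eps^2}(1-\tilde\rho_\eps^2)^2$ in $L^1_{\rm loc}$; both are needed so that the nontrivial amount $\eps_0^2/C_*$ of rescaled energy on the unit cell survives (or, in the paper's phrasing, is shown to vanish, yielding the contradiction). The paper obtains this through the lower bound $|q_\eps|\ge\frac12$, localized $W^{2,\frac43}$ elliptic estimates giving an $L^4$ bound on $\nabla\psi_\eps$, and a cut-off energy estimate for $1-q_\eps$ using equation \eqref{blowup_eqn2}. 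Your proposal correctly identifies this as the main implementation difficulty, but as written the contradiction hinges on unproven strong convergence, in addition to the missing case analysis described above.
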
  

\pf  Since $d_\eps$
is axisymmetric, it follows that $\Sigma_{t_0}=\mathcal{S}_{t_0}\times \mathbb S^1$, where $\mathcal{S}_{t_0}: =\Sigma_{t_0}\cap\mathbb D$
and $\mathbb D=\big\{(r,0,z)\in\Omega\big\}$.
Since $H^1(\Sigma_{t_0}\cap K)<+\infty$ for any compact set $K\subset\mathbb R^3$, it follows that if we picked up any $(r_0, z_0)\in\mathcal{S}_{t_0}$, with $r_0>0$, there would exist a positive integer
$N_0$ such that 
$$\# (\mathcal{C}_{\frac{r_0}2}^2(r_0,z_0)\cap \mathcal{S}_{t_0})=N_0,$$ 
where 
$$\mathcal{C}_{\delta}^2(r_0,z_0):=\big\{(r,z): \ |r-r_0|<\delta, \ |z-z_0|<\delta\big\}, \ 0<\delta\le r_0,$$
is the 2-dimensional poly-disc with center $(r_0,z_0)$ and  radius $\delta$.  
Thus there would exist $0<r_1<\frac{r_0}2$ such that 
\begin{equation}\label{isolated}
\mathcal{C}_{r_1}^2(r_0,z_0)\cap \mathcal{S}_{t_0}=\big\{(r_0, z_0)\big\}.
\end{equation}

Define the 3-dimensional poly-disc $\mathcal{C}^3_\delta(r_0,0,z_0)=\big\{(r,\theta, z):\ (r,z)\in \mathcal{C}^2_\delta(r_0,z_0), \ -\delta<\theta<\delta\big\}$. 
It is easy to see that there exist absolute constants $0<c_1<c_2$ such that 
\begin{equation}\label{equiv}
B_{c_1\delta}(r_0,0,z_0)\subset \mathcal{C}^3_\delta(r_0,0,z_0)\subset B_{c_2\delta}(r_0,0,z_0),
\ \forall 0<\delta\le \frac{r_0}4.
\end{equation}
Direct calculations imply
$$
e_\varepsilon(d_\varepsilon)=\frac12\left(|\nabla_{r,z}\rho_\varepsilon|^2+\rho_\varepsilon^2|\nabla_{r,z}\phi_\varepsilon|^2
+\frac{\rho_\varepsilon^2\sin^2\phi_\varepsilon}{r^2}\right)+\frac{(1-\rho_\varepsilon^2)^2}{4\varepsilon^2} =: e_\varepsilon(\rho_\varepsilon, \phi_\varepsilon),
$$
where $\nabla_{r,z} f(r,z)=(\partial_r f, \partial_z f)(r,z)$. From the definition of $\Sigma_{t_0}$ and \eqref{equiv}, we have that there exist $0<\alpha_2<\frac12$, $\alpha_1>0$ such that
$(r_0, 0, z_0)\in \Sigma_{t_0}$ if and only if 
\begin{equation}\label{alt-def}
\liminf_{\varepsilon\to 0}\frac{1}{2\delta} \int_{\mathcal{C}^3_\delta(r_0,0,z_0)} e_\varepsilon(\rho_\varepsilon, \phi_\varepsilon) r\d r \d \theta \d z>\alpha_1\varepsilon_0^2,
\quad  \forall 0<\delta<\alpha_2 r_0.
\end{equation} 
On the other hand, observe that 
$$
 \int_{\mathcal{C}^3_\delta(r_0,0,z_0)} e_\varepsilon(\rho_\varepsilon, \phi_\varepsilon) r \d r \d \theta \d z=2\delta 
  \int_{\mathcal{C}^2_\delta(r_0,z_0)} e_\varepsilon(\rho_\varepsilon, \phi_\varepsilon) r \d r \d z
$$
so that \eqref{alt-def} is equivalent to
\begin{equation}\label{alt-def1}
\liminf_{\varepsilon\to 0} \int_{\mathcal{C}^2_\delta(r_0,z_0)} e_\varepsilon(\rho_\varepsilon, \phi_\varepsilon) r \d r \d z>\alpha_1\varepsilon_0^2,
\quad  \forall 0<\delta<\alpha_2 r_0.
\end{equation} 
Since $\frac{r_0}{2}\le r\le \frac{3r_0}2$ for any $(r,z)\in \mathcal{C}^2_{\delta}(r_0,z_0)$ with $0<\delta<\alpha_2 r_0\le \frac{r_0}2$, inequality \eqref{alt-def1} is equivalent to
\begin{equation}\label{alt-def2}
\liminf_{\varepsilon\to 0} \int_{\mathcal{C}^2_\delta(r_0,z_0)} e_\varepsilon(\rho_\varepsilon, \phi_\varepsilon) \d r \d z>\frac{\alpha_3\varepsilon_0^2}{r_0},
\quad  \forall 0<\delta<\alpha_2 r_0,
\end{equation} 
for some $\alpha_3>0$ depending only on $\alpha_1$. 
Set $r_2=\min\{r_1, \alpha_2 r_0\}>0$. In order to track the position of the energy concentration, for $0<\lambda<\frac{r_2}2$ we define 
\begin{equation}\label{max_exhau}
\Theta_\varepsilon((r_0,z_0); \lambda)
:=\max\left\{ \int_{\mathcal{C}^2_\lambda(r_*,z_*)} e_\varepsilon(\rho_\varepsilon, \phi_\varepsilon) r \d r \d z:
\, (r_*,z_*)\in \mathcal{C}^2_{\frac{r_2}2}(r_0,z_0)\right\}.
\end{equation}
According to the definition \eqref{max_exhau} for $\Theta_\eps((r_0,z_0),\lambda)$, 
we know that for a sufficiently large constant $C_*>0$ (to be determined later),
there exist $0<\lambda_\varepsilon<\frac{r_2}2$ and $(r_\varepsilon, z_\varepsilon)
\in\mathcal{C}^2_{\frac{r_2}2}(r_0,z_0)$ such that
\begin{align}\label{max_exhau1}
\frac{\varepsilon_0^2}{C_*}=\Theta_\varepsilon((r_0,z_0); \lambda_\varepsilon)
=\int_{\mathcal{C}^2_{\lambda_\varepsilon}(r_\varepsilon,z_\varepsilon)} e_\varepsilon(\rho_\varepsilon, \phi_\varepsilon) r \d r \d z.
\end{align}
Next we claim that
\begin{lemma}
It holds
\begin{itemize}
\item[i)]  $\displaystyle\lim_{\varepsilon\to 0}\lambda_\varepsilon=0,$
\item[ii)]  $\displaystyle\lim_{\varepsilon\to 0}(r_\varepsilon, z_\varepsilon)=(r_0,z_0).$
\end{itemize}
\end{lemma}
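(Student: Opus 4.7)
The plan is to argue both parts by contradiction, exploiting that $(r_0,z_0)$ is an isolated point of $\mathcal{S}_{t_0}$ in $\mathcal{C}^2_{r_1}(r_0,z_0)$ by \eqref{isolated}, together with Theorem \ref{thm:epsilonCompactness} which controls $e_\varepsilon(d_\varepsilon)$ away from $\Sigma_{t_0}$. At the outset I would fix the constant $C_*$ once for all so that $1/C_* < \alpha_1$; this guarantees that the level $\varepsilon_0^2/C_*$ selected in \eqref{max_exhau1} lies strictly below the concentration threshold $\alpha_1\varepsilon_0^2$ appearing in \eqref{alt-def1}. For each fixed $\varepsilon$, the map $\lambda \mapsto \Theta_\varepsilon((r_0,z_0);\lambda)$ is continuous and non-decreasing, and for all sufficiently small $\varepsilon$ it exceeds $\varepsilon_0^2/C_*$ at $\lambda=r_2/2$ thanks to \eqref{alt-def1}; this ensures that $\lambda_\varepsilon$ with $\Theta_\varepsilon((r_0,z_0);\lambda_\varepsilon)=\varepsilon_0^2/C_*$ is well-defined.

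For part (i), suppose for contradiction that along a subsequence $\lambda_{\varepsilon_k}\to \lambda_*>0$. For any fixed $\lambda\in (0,\lambda_*)$ and all $k$ large, the monotonicity in $\lambda$ of $\Theta_\varepsilon$ together with the maximality in \eqref{max_exhau} yields
\[
\int_{\mathcal{C}^2_\lambda(r_0,z_0)} e_{\varepsilon_k}(\rho_{\varepsilon_k},\phi_{\varepsilon_k})\, r\,\d r\,\d z \;\le\; \Theta_{\varepsilon_k}((r_0,z_0);\lambda) \;\le\; \frac{\varepsilon_0^2}{C_*}.
\]
Passing to $\liminf_{k\to\infty}$ and comparing with \eqref{alt-def1}, which holds because $(r_0,z_0)\in \mathcal{S}_{t_0}$, yields $\alpha_1\varepsilon_0^2 < \varepsilon_0^2/C_*$, contradicting $C_*>1/\alpha_1$. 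Therefore $\lambda_\varepsilon\to 0$.

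For part (ii), assume along a subsequence that $(r_{\varepsilon_k}, z_{\varepsilon_k})\to (r_*,z_*)\in \overline{\mathcal{C}^2_{r_2/2}(r_0,z_0)}$ with $(r_*,z_*)\ne (r_0,z_0)$. Since $r_2/2<r_1$, the limit lies in $\mathcal{C}^2_{r_1}(r_0,z_0)$, and the isolation property \eqref{isolated} forces $(r_*,z_*)\notin \mathcal{S}_{t_0}$. By axisymmetry, $\Sigma_{t_0}=\mathcal{S}_{t_0}\times \mathbb{S}^1$, so the cylindrical neighborhood $\mathcal{C}^3_\delta(r_*,0,z_*)$ lies in $\Omega\setminus \Sigma_{t_0}$ for all sufficiently small $\delta>0$. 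Theorem \ref{thm:epsilonCompactness}(ii) then provides $e_\varepsilon(d_\varepsilon)\,\d x \rightharpoonup \tfrac12|\nabla d|^2\,\d x$ as Radon measures on this neighborhood. On the other hand, since $\lambda_{\varepsilon_k}\to 0$ by part (i) and $(r_{\varepsilon_k},z_{\varepsilon_k})\to (r_*,z_*)$, for $k$ large we have $\mathcal{C}^2_{\lambda_{\varepsilon_k}}(r_{\varepsilon_k},z_{\varepsilon_k}) \subset \mathcal{C}^2_\delta(r_*,z_*)$, so that
\[
\int_{\mathcal{C}^2_\delta(r_*,z_*)} e_{\varepsilon_k}\, r\,\d r\,\d z \;\ge\; \int_{\mathcal{C}^2_{\lambda_{\varepsilon_k}}(r_{\varepsilon_k},z_{\varepsilon_k})} e_{\varepsilon_k}\,r\,\d r\,\d z \;=\; \frac{\varepsilon_0^2}{C_*}.
\]
After rewriting the planar integrals as 3D integrals via integration in $\theta$ and inserting smooth cutoffs that sandwich $\mathcal{C}^3_\delta(r_*,0,z_*)$ between slightly smaller and larger polydiscs, weak Radon-measure convergence permits passing to the limit $k\to\infty$ to obtain $\int_{\mathcal{C}^2_\delta(r_*,z_*)} \tfrac12|\nabla d|^2\, r\,\d r\,\d z \ge \varepsilon_0^2/C_*$ for every small $\delta$. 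Letting $\delta\to 0^+$ contradicts the local integrability of $|\nabla d|^2$, and hence $(r_\varepsilon,z_\varepsilon)\to (r_0,z_0)$.

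The main obstacle I anticipate is the careful coordination of constants together with the handling of weak convergence on open polydiscs: the strict inequality in the definition of $\mathcal{S}_{t_0}$ in \eqref{alt-def1} must survive the passage to $\liminf$ in part (i), forcing the choice $C_*>1/\alpha_1$, while in part (ii) one must ensure that the open domains $\mathcal{C}^2_\delta$ are treated in a way compatible with weak Radon-measure convergence, which is done cleanly by cutoff sandwiching.
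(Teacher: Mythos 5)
Your proof is correct and follows essentially the same strategy as the paper: part (i) is the identical contradiction via the monotonicity of $\Theta_\varepsilon$ in $\lambda$ and the concentration threshold at $(r_0,z_0)$, and part (ii) rests on the same persistence of the energy level $\varepsilon_0^2/C_*$ in arbitrarily small polydiscs around the putative limit $(r_*,z_*)$. The only (harmless) variation is how you close part (ii): the paper concludes directly that $(r_*,0,z_*)\in\Sigma_{t_0}$, contradicting \eqref{isolated}, whereas you invoke the isolation first and then contradict the weak Radon-measure convergence from Theorem \ref{thm:epsilonCompactness} via absolute continuity of $\int|\nabla d|^2$ — an equivalent, and if anything slightly more explicit, way of reaching the same contradiction.
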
 
\begin{proof}
Suppose i) was false. Then there would exist $\lambda_0>0$ such that $ \displaystyle\lim_{\varepsilon\to 0}\lambda_\varepsilon=\lambda_0$ 
and hence 
$$
\frac{1}{2\lambda_0}\int_{\mathcal{C}^3_{\lambda_0}(r_0,0,z_0)}e_\varepsilon(\rho_\varepsilon, \phi_\varepsilon)r\d r\d \theta \d z
=\int_{\mathcal{C}^2_{\lambda_0}(r_0,z_0)}e_\varepsilon(\rho_\varepsilon, \phi_\varepsilon)r\d r \d z\le \Theta_\varepsilon((r_0,z_0); \lambda_0)=\frac{\varepsilon_0^2}{C_*}.
$$
This, combined with the $\varepsilon_0$-compactness Theorem \ref{thm:epsilonCompactness}, would imply that $(r_0,0,z_0)\notin \Sigma_{t_0}$, which contradicts to the choice of $(r_0, 0,z_0)$. \\
Suppose ii) was false. Then we would assume that $(r_\varepsilon, z_\varepsilon)\rightarrow (r_*,z_*)\in \mathcal{C}_{\frac{r_2}2}^2(r_0,z_0)$ for some
$(r_*,z_*)\not=(r_0,z_0)$. Hence for any fixed $0<r<\frac{r_2}2$, it holds that 
\begin{align*}
\frac{1}{2r}\liminf_{\eps \to 0}\int_{\mathcal{C}^3_{r}(r_*,0, z_*)} e_\varepsilon(\rho_\varepsilon,\phi_\varepsilon) r\d r \d \theta \d z
&=\liminf_{\eps\to 0}\int_{\mathcal{C}^2_{r}(r_*, z_*)} e_\eps(\rho_\eps,\phi_\eps) r \d r \d z\\
&\ge 
\liminf_{\eps \to 0}\int_{\mathcal{C}^2_{r}(r_\eps, z_\eps)} e_\eps(\rho_\eps,\phi_\eps) r \d r \d z \ge \frac{\varepsilon_0^2}{C_*}.
\end{align*}
This, combined with the definition of $\Sigma_{t_0}$, implies that $(r_*,0,z_*)\in\Sigma_{t_0}\cap \mathcal{C}^3_{r_1}(r_0,0,z_0)$,
which contradicts to
\eqref{isolated}.
\end{proof}

As next step,  we perform a blow-up argument of $(\rho_\varepsilon, \phi_\varepsilon)$ centered at $(r_\varepsilon, z_\varepsilon)$. This procedure is similar to \cite{lin1999}.  
To begin with, we set 
\begin{align*}	
	(q_\eps, \psi_\eps)(r,z):=(\rho_\eps, \phi_\eps)(r_\eps+\lambda_\eps r, z_\eps+\lambda_\eps z): D_\eps \to [-1,1] \times \R,
\end{align*}
where $D_\eps:=\lambda_\eps^{-1}\left(\mathcal{C}^2_{\frac{r_1}2}(r_0,z_0)\setminus\big\{(r_\eps, z_\eps)\big\}\right).$

Direct calculations, combined with \eqref{max_exhau1}, imply 
\begin{align}\label{max_exhau2}
\frac{\varepsilon_0^2}{C_*}&=\int_{\mathcal{C}^2_1(0,0)} \widehat{e}_\varepsilon(q_\varepsilon, \psi_\varepsilon)(r,z) (r_\varepsilon+\lambda_\varepsilon r) \d r \d z \nonumber\\
&\ge \int_{\mathcal{C}^2_1(r,z)} \widehat{e}_\varepsilon(q_\varepsilon, \psi_\varepsilon)(r,z) (r_\varepsilon+\lambda_\varepsilon r) \d r \d z,\quad  \forall (r,z)\in D_\varepsilon,
\end{align}
where 
\begin{align*}
\widehat{e}_\eps & (q_\varepsilon, \psi_\varepsilon)(r,z) \\
& =\frac12\left(|\nabla_{r,z}q_\varepsilon|^2+q_\varepsilon^2|\nabla_{r,z}\psi_\varepsilon|^2
+\frac{\lambda_\varepsilon^2}{(r_\varepsilon+\lambda_\varepsilon r)^2} q_\varepsilon^2\sin^2\psi_\varepsilon\right)(r,z)
+\frac{\lambda_\varepsilon^2(1-q^2_\varepsilon(r,z))^2}{4\varepsilon^2}.
\end{align*}
Moreover, for any $R>0$, by the almost monotonicity inequality from Lin-Wang \cite[Lemma 3.1]{lin2016} and the $\theta$-independence of $\widehat{e}_\eps$, it holds that
\begin{align}\label{total_energy}
\int_{\mathcal{C}^2_R(0,0)} & \widehat{e}_\eps (q_\eps, \psi_\eps)(r,z)(r_\eps+\lambda_\eps r) \d r \d z\nonumber\\
&=(2R)^{-1} \int_{\mathcal{C}^3_R(0,0,0)} \widehat{e}_\eps(q_\eps, \psi_\eps)(r,z)(r_\eps+\lambda_\eps r) \d r \d \theta \d z\nonumber\\
&=(2R)^{-1}\int_{\mathcal{C}^2_{R\lambda_\eps}(r_\eps, z_\eps)\times [-R, R]} e_\eps(d_\eps)\,r\d r \d \theta \d z\nonumber\\
&\le (2R)^{-1}\sum_{j=-[\frac{1}{\lambda_\eps}]-1}^{[\frac{1}{\lambda_\eps}]+1}
\int_{\mathcal{C}^2_{R\lambda_\eps}(r_\eps, z_\eps)\times [j R\lambda_\eps, (j+1)R\lambda_\eps ]} e_\eps(d_\eps)\,r\d r \d \theta \d z \nonumber\\
&\le\lambda_\eps \sum_{j=-[\frac{1}{\lambda_\eps}]-1}^{[\frac{1}{\lambda_\eps}]+1}
(R\lambda_\eps)^{-1}\int_{\mathcal{C}^3_{R\lambda_\eps}(r_\eps, jR\lambda_\eps, z_\eps)} e_\eps(d_\eps)\,r\d r \d \theta \d z\nonumber\\
& \le C(E_0+\Lambda).
\end{align}
Scaling of \eqref{axisym1} implies 
\begin{equation}\label{blowup_eqn1}
\widehat{L}_\eps\psi_\eps-\frac{\lambda_\eps^2\sin\psi_\eps\cos\psi_\eps}
{(r_\eps+\lambda_\eps r)^2}+2\frac{\nabla q_\eps\cdot\nabla\psi_\eps}{q_\eps}=
\tau_\eps^1\ \ \ \ \ \ \ \ \ \ \ \ \ \ \ \ \ \ \ \ {\rm{in}}\ \ D_\eps,
\end{equation}
and 
\begin{equation}\label{blowup_eqn2}
\widehat{L}_\eps q_\eps-q_\eps\left(|\nabla\psi_\eps|^2
+\frac{\lambda_\eps^2\sin^2\psi_\eps}{(r_\eps+\lambda_\eps r)^2}\right)+
\frac{\lambda_\eps^2(1-q_\eps^2)}{\eps^2}q_\eps=\tau_\eps^2 \ \ \ \ {\rm{in}}\ \ D_\eps,
\end{equation}
where
$$
\begin{cases}\displaystyle\widehat{L}_\eps={\partial^2_r}+\frac{\lambda_\eps}{r_\eps+\lambda_\eps r}\partial_ r+
{\partial_ z^2},\\
\displaystyle\tau_\eps^1(r,z)=\lambda_\eps^2(\displaystyle\partial_t\phi_\eps+u_\epsilon^r \partial_r\phi_\eps+u_\eps^z\partial_z\phi_\eps)
(r_\eps+\lambda_\eps r, z_\eps+\lambda_\eps z),\\
\displaystyle\tau_\eps^2(r,z)=\lambda_\eps^2(\partial_t\rho_\eps+u_\epsilon^r \partial_r\rho_\eps+u_\eps^z\partial_z\rho_\eps)(r_\eps+\lambda_\eps r, z_\eps+\lambda_\eps z).
\end{cases} 
$$
From \eqref{goodtime}, we have that
$$
\max\big\{\big\|\tau_\eps^1\big\|_{L^2(D_\eps)}, 
\ \big\|\tau_\eps^2\big\|_{L^2(D_\eps)}\big\}
\le \lambda_\eps \big\|\tau_\eps\big\|_{L^2(\Omega)}\le \lambda_\eps \Lambda ~ \to ~ 0.
$$
At this point, we observe that there are three cases of possible limit scaling:
\begin{align}   \label{limitCases}
	\lim_{ \eps \to 0^+} \frac{\lambda_\eps}{\eps} = \begin{cases} + \infty, \\  \gamma \in \R_+, \\ 0, \end{cases} 
\end{align}
up to a subsequence. In comparison to \cite{lin1999}, the most difficult case is the first one. On the one hand, it would lead to a limiting non-trivial harmonic map to $\mathbb S^1$ if the blow-up actually existed. On the other hand, we show that no non-trivial harmonic map to $\mathbb S^1$ can be the scaling limit.  More precisely, we  have
\begin{lemma}  \label{thm:noBlowUp}
Let $\displaystyle\lim_{ \eps \to 0^+} \frac{\lambda_\eps}{\eps} = +\infty$. Then, up to a subsequence, it holds
\begin{equation}\label{h1conv}
(q_\eps, \psi_\eps)\rightarrow (1,c_0) \ \ {\rm{in}}\ H^1_{\rm{loc}}(\mathbb R^2). 
\end{equation}
for a constant  $c_0\in\mathbb R$.
\end{lemma}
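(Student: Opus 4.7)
\pf\textbf{(Sketch)}
The plan is to combine a Ginzburg--Landau limit argument with the axisymmetric structure: since the director takes values in the great circle $\{\sin\psi\,\EE_r+\cos\psi\,\EE_z\}\subset\Sp^2$, the blow-up limit off the $z$-axis should be a harmonic map $\R^2\to\Sp^1$ of finite Dirichlet energy, which a Liouville-type argument forces to be constant.

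From the uniform scaled energy bound \eqref{total_energy}, $\nabla q_\eps$ and $q_\eps\nabla\psi_\eps$ are bounded in $L^2_{\loc}(\R^2)$, and the penalization $\lambda_\eps^2(1-q_\eps^2)^2/\eps^2$ is bounded in $L^1_{\loc}$. Because $\lambda_\eps/\eps\to+\infty$, the penalization bound forces $(1-q_\eps^2)\to 0$ in $L^2_{\loc}$; together with $0\le q_\eps\le 1$ this yields $q_\eps\to 1$ in $L^p_{\loc}$ for every $p<\infty$, hence $q_\eps\rightharpoonup 1$ weakly in $H^1_{\loc}$. The strong $L^p_{\loc}$-convergence of $q_\eps$ transfers the $L^2$-bound on $q_\eps\nabla\psi_\eps$ into one on $\nabla\psi_\eps$, so after subtracting a normalizing constant $c_\eps$ (say, the average of $\psi_\eps$ over the unit disc) a subsequence of $\psi_\eps-c_\eps$ converges weakly in $H^1_{\loc}(\R^2)$ to some $\psi_\infty$.

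Next I would pass to the limit in \eqref{blowup_eqn1}. Since $r_\eps\to r_0>0$ and $\lambda_\eps\to 0$, the coefficient $\lambda_\eps/(r_\eps+\lambda_\eps r)\to 0$ uniformly on compact sets so $\widehat{L}_\eps\to\Delta$; the angular term $\lambda_\eps^2\sin\psi_\eps\cos\psi_\eps/(r_\eps+\lambda_\eps r)^2$ vanishes in $L^\infty_{\loc}$, and $\tau_\eps^1\to 0$ in $L^2(\R^2)$ by the remark following \eqref{blowup_eqn2}. The quadratic coupling $2\nabla q_\eps\cdot\nabla\psi_\eps/q_\eps$ requires strong $L^2_{\loc}$-convergence $\nabla q_\eps\to 0$. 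To obtain it, I would test \eqref{blowup_eqn2} against $\xi^2(q_\eps-1)$ for $\xi\in C_c^\infty(\R^2)$, integrate by parts, and exploit the favourable sign of the penalization contribution; the resulting right-hand side is controlled by $\|1-q_\eps\|_{L^2(\supp\xi)}$ together with an $\eps$-regularity estimate giving local $L^\infty$-bounds on $|\nabla d_\eps|$ (hence on $|\nabla\psi_\eps|$), which applies provided the constant $C_*$ in \eqref{max_exhau1} is chosen large enough to place the scaled unit-square energy below the standard Ginzburg--Landau $\eps$-regularity threshold. Passing to the limit in \eqref{blowup_eqn1} then yields $-\Delta\psi_\infty=0$ on $\R^2$.

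Finally, by weak lower semi-continuity and \eqref{total_energy}, $\int_{\R^2}|\nabla\psi_\infty|^2<\infty$; a harmonic function on $\R^2$ whose gradient components are themselves harmonic $L^2$-functions must be constant (vanishing follows from the mean value property applied to the gradient on balls of arbitrary radius), so $\psi_\infty\equiv c_0$ for some $c_0\in\R$. Strong $H^1_{\loc}$-convergence of $\psi_\eps-c_\eps$ to $c_0$ is then obtained by testing \eqref{blowup_eqn1} against $\xi^2(\psi_\eps-c_\eps-c_0)$ and invoking the already-established vanishing of the other terms, after which $c_\eps$ is absorbed into $c_0$. The main technical obstacle is the treatment of the quadratic coupling $\nabla q_\eps\cdot\nabla\psi_\eps/q_\eps$, which is merely $L^1_{\loc}$-bounded and therefore cannot be handled by weak compactness alone; its resolution depends on the $\eps$-regularity theory for the Ginzburg--Landau system together with the smallness of the scaled unit-square energy built into \eqref{max_exhau1}.
\endpf
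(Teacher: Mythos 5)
Your overall scheme is the same as the paper's: use the penalization bound and $\lambda_\eps/\eps\to+\infty$ to force $q_\eps\to1$, pass to the limit in \eqref{blowup_eqn1} to see that the blow-up limit of $\psi_\eps$ is an entire harmonic function with finite Dirichlet energy (hence a constant, by \eqref{total_energy}), and then upgrade to strong $H^1_{\rm loc}$ convergence by energy-type testing. However, the step you lean on to control the quadratic coupling $2\nabla q_\eps\cdot\nabla\psi_\eps/q_\eps$ --- an $\eps$-regularity estimate giving local $L^\infty$ bounds on $|\nabla d_\eps|$ at the blow-up scale --- is not available here, and this is a genuine gap. At the fixed time slice $t_0$ the rescaled maps are only approximated harmonic maps with tension fields $\tau^1_\eps,\tau^2_\eps$ controlled in $L^2$ (with norm $\lesssim\lambda_\eps\Lambda$); small-energy elliptic regularity with $L^2$ right-hand side yields $W^{2,2}_{\rm loc}$, equivalently $W^{1,p}_{\rm loc}$ for $p<\infty$, but \emph{not} $W^{1,\infty}_{\rm loc}$ in two dimensions. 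The only pointwise gradient bound at hand is the parabolic estimate $|\nabla d_\eps|\le C/\eps$, which under the rescaling by $\lambda_\eps$ becomes $C\lambda_\eps/\eps\to\infty$ precisely in the regime of this lemma. The paper circumvents this by proving only an $L^4_{\rm loc}$ bound on $\nabla\psi_\eps$ (see \eqref{l4-bound}): it writes a Poisson equation for the localized function $\eta^2(\psi_\eps-\overline{\psi_\eps})$, applies the $W^{2,4/3}$ estimate, and absorbs the coupling term using the smallness of $\|\nabla q_\eps\|_{L^2}$ guaranteed by the choice of $C_*$ in \eqref{max_exhau1}. An $L^4$ bound is in fact all you need in every place where you invoked $L^\infty$ (e.g.\ $\int\xi^2|\nabla\psi_\eps|^2|1-q_\eps|\le\|\nabla\psi_\eps\|_{L^4}^2\|1-q_\eps\|_{L^2}\to0$), so your plan is repairable, but as written the key estimate would fail.

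Two smaller points. First, your claim that the strong $L^p_{\rm loc}$ convergence $q_\eps\to1$ ``transfers'' the $L^2$ bound from $q_\eps\nabla\psi_\eps$ to $\nabla\psi_\eps$ is not a valid mechanism: $\nabla\psi_\eps$ could be large exactly on the small set where $q_\eps$ is small. What is needed is the uniform pointwise lower bound $|q_\eps|\ge\frac12$ on $D_\eps$ (the paper's \eqref{boundFromBelow}, taken from Lin--Wang), whose proof uses the unscaled Lipschitz bound $|\nabla d_\eps|\le C/\eps$ together with the smallness of the scaled energy --- it does not follow from $L^p$ convergence. Second, your reordering (establishing $\nabla q_\eps\to0$ in $L^2_{\rm loc}$ by testing \eqref{blowup_eqn2} before passing to the limit in \eqref{blowup_eqn1}) is legitimate once the $L^4$ bound and the lower bound on $q_\eps$ are in place; the paper instead passes to the limit in \eqref{blowup_eqn1} first, treating the coupling term as a weak--strong pairing, and only afterwards proves $q_\eps\to1$ in $H^1_{\rm loc}$, but this difference is organizational rather than substantive.
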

The remainder of this section is devoted to verification of Lemma \ref{thm:noBlowUp} and afterwards the consideration of the remaining two cases in \eqref{limitCases}.
In order to show \eqref{h1conv}, first observe \eqref{max_exhau2} implies that
\begin{align} \label{max_exhau3}
\begin{split}
 \int_{\mathcal{C}^2_1(0,0)} \widehat{e}_\eps(q_\eps, \psi_\eps)(r,z)\, \d r \d z&\ge \frac{\eps_0^2}{2r_0C_*},  \\
\int_{\mathcal{C}^2_1(r,z)} \widehat{e}_\eps(q_\eps, \psi_\eps)(r,z) \,\d r \d z&\le \frac{2\eps_0^2}{r_0C_*},
\quad  \forall (r,z)\in D_\eps. 
\end{split}
\end{align}
From \eqref{max_exhau3} and the fact $\frac{\lambda_\eps}{\eps} \to +\infty$, we may assume that  (c.f.\ \cite[Proof of Lemma 4.1]{lin2016})
\begin{align}
    |q_\eps|\ge \frac12 \ \ {\rm{in}}\ \ D_\eps.  \label{boundFromBelow}
\end{align}
For any $(r_*, z_*)\in D_\eps$, with ${\rm{dist}}( (r_*, z_*), \partial D_\eps)\ge 3$, we take a localization, i.e.\ let $\eta\in C_0^\infty(\mathcal{C}^2_2(r_*,z_*))$ be such that $0\le \eta\le1$, and $\eta\equiv 1$ in
$\mathcal{C}^2_1(r_*,z_*)$. Then one can verify that $v_\eps=\eta^2\big(\psi_\eps-\overline{\psi_\eps}\big)$, where 
$\overline{\psi_\eps}=\displaystyle\fint_{\mathcal{C}^2_2(r_*,z_*)}\psi_\eps$, satisfies
\begin{eqnarray}\label{blowup3}
(\partial^2_r+\partial^2_z)v_\eps &=&-2\frac{\nabla q_\eps}{q_\eps} \cdot\nabla v_\eps
+\eta^2\tau_\eps^1+2\frac{\nabla q_\eps\cdot\nabla \eta^2}{q_\eps} \cdot (\psi_\eps-\overline{\psi_\eps})\nonumber\\
&&+\frac{\lambda_\eps^2}{(r_\eps+\lambda_\eps r)^2} \eta^2 \sin\psi_\eps \cos\psi_\eps
-\frac{\lambda_\eps}{r_\eps+\lambda_\eps r}\eta^2\partial_r \psi_\eps\nonumber\\
&&+(\psi_\eps-\overline{\psi_\eps})(\partial^2_r+\partial^2_z)\eta^2+2\nabla_{r,z}\eta^2\cdot\nabla_{r,z}\psi_\eps.
\end{eqnarray}
Applying the $W^{2,\frac43}$-estimate for the Poisson equation (cf. \cite{gilbarg2001}), we obtain that 
\begin{align*}
&\big\|\nabla_{r,z} v_\eps\big\|_{L^4(\mathbb R^2)}\le C\big\|\nabla^2_{r,z} v_\eps\big\|_{L^\frac43(\mathbb R^2)}\\
&\le C\Big(\big\|\nabla_{r,z} q_\eps\big\|_{L^2(\rm{supp}(\eta))}\big(\big\|\nabla_{r,z}v_\eps\big\|_{L^4(\mathbb R^2)}
+\big\|\psi_\eps-\overline{\psi_\eps}\big\|_{L^4(\mathcal{C}^2_2(r_*,z_*))}\big)
+\big\|\eta^2 \tau_\eps^1\big\|_{L^2(\mathbb R^2)}\\
&\ \ +\frac{\lambda_\eps^2}{r_0^2} +\big\|\nabla_{r,z}\psi_\eps\big\|_{L^2(\mathcal{C}^2_2(r_*,z_*))}
\Big).
\end{align*}
This implies
\begin{align*}
&\big(1-C\big\|\nabla_{r,z}q_\eps\big\|_{L^2(\mathcal{C}^2_2(r_*,z_*))}\big) \big\|\nabla_{r,z} v_\eps\big\|_{L^4(\R^2)}\\
&\le C\Big(\big\|\nabla_{r,z} q_\eps\big\|_{L^2(\mathcal{C}^2_2(r_*,z_*)}\big\|\nabla_{r,z}\psi_\eps\big\|_{L^2(\mathcal{C}^2_2(r_*,z_*))}
+(\Lambda +r_0^{-2}) \lambda_\eps^2+\big\|\nabla_{r,z}\psi_\eps\big\|_{L^2(\mathcal{C}^2_2(r_*,z_*)}\Big)\\
&\le C\Big(r_0^{-1}\eps_0^2+(\Lambda+r_0^{-2}) r_0^2\Big).
\end{align*} 
Now we choose $C_*\ge \frac{4C\eps_0^ 2}{r_0}$  in \eqref{max_exhau1} so that
$C\big\|\nabla_{r,z}q_\eps\big\|_{L^2(\mathcal{C}^2_2(r_*,z_*))}\le \frac{C\eps_0^2}{r_0C_*}\le \frac12.
$
Hence
\begin{equation}\label{l4-bound}
\big\|\nabla_{r,z} \psi_\eps\big\|_{L^4(\mathcal{C}^2_1(r_*,z_*))} \le\big\|\nabla_{r,z} v_\eps\big\|_{L^4}
\le 2C\Big(r_0^{-1}\eps_0^2+(\Lambda+r_0^{-2}) r_0^2\Big)
\end{equation} 
holds for any $(r_*,z_*)\in D_\eps$ with ${\rm{dist}}((r_*, z_*), \partial D_\eps)\ge 3$. 

Observe that $D_\eps\to\mathbb R^2$ as $\eps\to 0$. Combining this with \eqref{l4-bound}, we may assume that there exists
a function $\psi\in W^{1,4}_{\rm{loc}}(\mathbb R^2)$ such that $\psi_\eps\to\psi$ in $W^{1,p}_{\rm{loc}}(\mathbb R^2)$ for $2\le p<4$. Also observe that
\eqref{max_exhau2} in combination with $\frac{\lambda_\eps} {\eps} \to +\infty$ implies that $q_\varepsilon\rightharpoonup 1$ in $H^1(\mathbb R^2)$ and $q_\eps\rightarrow 1$ a.e.\ and in $L^q(\R^2)$ locally in
$\mathbb R^2$ for any $1\le q<\infty$. Hence, using \eqref{boundFromBelow} and the Dominated Convergence Theorem, 
$$\frac{\nabla\rho_\eps\cdot\nabla\psi_\eps}{\rho_\eps}\rightharpoonup 0 \ \ {\rm{in}}\ \ L^1_{\rm{loc}}(\mathbb R^2). 
$$
Therefore, after passing $\eps\to 0$ in  equation \eqref{blowup_eqn1}, we obtain that
$$
(\partial_r^2+\partial_z^2)\psi=0\  \ \ {\rm{in}}\ \  \mathbb R^2,
$$
for the weak limit $ \psi$ of $(\psi_\eps)_\eps$ in $H^1(\R^2)$. Namely, $\psi$ is a harmonic function in $\mathbb R^2$. On the other hand, it follows from \eqref{total_energy} that
$$
\int_{\mathbb R^2}|\nabla_{r,z}\psi|^2\,\d r \d z\le C(E_0+\Lambda).
$$
Therefore, $\psi=c_0$ is a constant function. In particular, one obtains that
\begin{equation}\label{psi-h1conv}
\psi_\eps\rightarrow c_0 \quad  {\rm{and}} \quad  \nabla\psi_\eps\to 0 \ \ {\rm{in}}\ \ L^2_{\rm{loc}}(\mathbb R^2). 
\end{equation}
Next we want to show that 
\begin{equation}\label{q-h1conv}
q_\eps \to 1 \ \ {\rm{in}}\ \ H^1_{\rm{loc}}(\mathbb R^2).
\end{equation}
Denote $f_\eps=1-q_\eps$. It follows
from equation \eqref{blowup_eqn2} that $f_\eps$ solves
\begin{equation}\label{blowup_eqn4}
-(\partial_r^2+\partial_z^2)f_\epsilon+
\frac{\lambda_\eps^2(1+q_\eps)q_\eps}{\eps^2}f_\eps
=q_\eps\left(|\nabla\psi_\eps|^2+\frac{\lambda_\eps^2\sin^2\psi_\eps}{(r_\eps+\lambda_\eps r)^2}\right)+
\tau_\eps^2 \qquad  {\rm{in}}\ \ D_\eps.
\end{equation}
Multiplying both sides of \eqref{blowup_eqn4} by $\eta^2 f_\eps$, where $\eta$ is the cut-off function of $\mathcal{C}^2_1(r_*,z_*)$ as above,
and integrating over $\mathcal{C}^2_2(r_*,z_*)$, we obtain that
\begin{align*}
&\int_{\mathcal{C}^2_2(r_*,z_*)} \big(\eta^2|\nabla_{r,z} f_\eps|^2+\frac{\lambda_\eps^2(1+q_\eps)q_\eps}{\eps^2}\eta^2f_\eps^2\big)\,\d r \d z\\
&=\int_{\mathcal{C}^2_2(r_*,z_*)}\left[\eta^2\left( q_\eps\left(|\nabla\psi_\eps|^2+\frac{\lambda_\eps^2\sin^2\psi_\eps}{(r_\eps+\lambda_\eps r)^2}\right)+
\tau_\eps^2\right)f_\eps-f_\eps\nabla_{r,z}f_\eps\cdot\nabla_{r,z}\eta^2\right]\, \d r \d z\\
&\le C\big\|f_\eps\big\|_{L^2(\mathcal{C}^2_2(r_*,z_*))}\left(1+\big\|\eta \nabla \psi_\eps\big\|_{L^4(\mathcal{C}^2_1(r_*,z_*))}^2
+\lambda_\eps\big\|\tau_\eps\big\|_{L^2(\mathcal{C}^2_2(r_*,z_*))}\right) \\
& \spc +\frac12\int_{\mathcal{C}^2_2(r_*,z_*)} \eta^2|\nabla_{r,z} f_\eps|^2\,\d r \d z.
\end{align*}
This yields that for any $(r_*,z_*)\in D_\eps$, we have that
\begin{align*}
&\int_{\mathcal{C}^2_2(r_*,z_*)} \eta^2\left(|\nabla_{r,z} q_\eps|^2+\frac{\lambda_\eps^2}{\eps^2}(1-q_\eps)^2\right)\,\d r \d z\\
&\le\int_{\mathcal{C}^2_2(r_*,z_*)} \eta^2\big(|\nabla_{r,z} f_\eps|^2+\frac{\lambda_\eps^2}{\eps^2}f_\eps^2\big)\,drdz\\
&\le C\eps \no{\frac{1-q^2_\eps}{\eps}}{L^2(\mathcal{C}^2_2(r_*,z_*))}\left(1+\big\|\eta \nabla \psi_\eps\big\|_{L^4(\mathcal{C}^2_2(r_*,z_*))}^2
+\lambda_\eps\big\|\tau_\eps\big\|_{L^2(\mathcal{C}^2_2(r_*,z_*))}\right) \\
&\to 0.
\end{align*} 
This implies \eqref{q-h1conv} as well
\begin{equation}\label{potential-conv}
\frac{\lambda_\eps^2}{\eps^2}(1-q_\eps)^2\to 0 \ \ \ {\rm{in}}\ \ \ L^1_{\rm{loc}}(\mathbb R^2). 
\end{equation}
Combining \eqref{potential-conv} and \eqref{q-h1conv} with \eqref{psi-h1conv}, we obtain that
\begin{align*}
&\frac{\eps_0^2}{C_*}=\int_{\mathcal{C}^2_1(0,0)} \widehat{e}(q_\eps, \psi_\eps)(r,z) (r_\eps+\lambda_\eps r)\,\d r \d z\\
&\le 2r_0\int_{\mathcal{C}^2_1(0,0)} \left[|\nabla_{r,z}q_\eps|^2+q_\eps^2|\nabla_{r,z}\psi_\eps|^2
+\frac{\lambda_\eps^2q_\eps^2}{(r_\varepsilon+\lambda_\eps r)^2}\sin^2\psi_\eps+\frac{\lambda_\eps^2}{\eps^2}(1-q_\eps^2)^2\right](r,z)\, \d r \d z \\
&\rightarrow 0.
\end{align*}
This is clearly impossible.   

\medskip
Concerning the remaining cases of \eqref{limitCases}, we refer to \cite{lin1999} for details and just briefly sketch them here. 
It suffices to consider $\eqref{ginzburgLandau}_3$. 
wW define as above 
\begin{align*}
w_\eps(r,z)=w^r_\eps(r,z)\EE_r+w^z_\eps(r,z)\EE_z:=d_\eps (r_\eps+\lambda_\eps r, z_\eps+\lambda_\eps z, t_0): D_\eps \to \R^3.
\end{align*}
Then
\begin{align}\label{blowupEqn2.0}
		&-\big(\frac{\partial^2}{\partial r^2}
  +\frac{\lambda_\eps}{r}\frac{\partial}{\partial r}+\frac{\partial^2}{\partial z^2}\big)w_\eps^r -\frac{\lambda^2_\eps}{r^2}w_\eps^r+\frac{\lambda_\eps^2}{\eps^2 } (1-|w_\eps|^2) w_\eps^r = \tilde{\tau}_\eps^r,\\
&\quad-\big(\frac{\partial^2}{\partial r^2}
  +\frac{\lambda_\eps}{r}\frac{\partial}{\partial r}+\frac{\partial^2}{\partial z^2}\big)w_\eps^z 
  +\frac{\lambda_\eps^2}{\eps^2 } (1-|w_\eps|^2) w_\eps^z = \tilde{\tau}_\eps^z,
\end{align}
in $D_\eps$, with 
$$\tilde{\tau}_\eps(r,z)=\tilde{\tau}_\eps^r(r,z)\EE_r
+\tilde{\tau}_\eps^z(r,z)\EE_z:= \lambda_\eps^2 \left( \pa_t d_\eps + (u_\eps \cdot \nabla) d_\eps \right) (r_\eps + \lambda_\eps r, z_\eps + \lambda_\eps z, t_0).$$ 
In the case that $\frac{\lambda_\eps}{\eps} \to \gamma>0$, recalling \eqref{max_exhau1} and \eqref{goodtime}, 
we have that $(w_\eps)_\eps$ converges locally in $H^1(\R^2)$ to a limiting map $ w$ satisfying 
\begin{align*}
	- \Delta w +  \gamma^2 (1-|w|^2) w =0,		\quad \text{on }\ \  \R^2.
\end{align*}
This, in combination with 
$$\frac{\eps_0^2}{C^*} \le \int_{\R^2} \left[ \frac12{|\nabla w|^2} + \gamma^2(1-|w|^2)^2\right] \d r \d z<\infty,$$
implies $w$ must be nontrivial.
On the other hand, by a Pohozaev argument as in \cite{lin1999} such a map must be constant. We get a desired contradiction. 

The final case that $\frac{\lambda_\eps}{\eps} \to 0$ leads to a limiting map $w$ satisfying 
\begin{align*}
		\Delta w = 0 \qquad \text{on }\ \ \R^2,
\end{align*}
and
$$
0<\int_{\R^2}|\nabla w|^2\,drdz<+\infty.
$$
Such a harmonic map must be constant as well, which is again impossible.

\smallskip
Putting all these three cases together, we proved
that $\Sigma_{t_0}\setminus\ZZ=\emptyset$. \qed

\bigskip
With Theorem \ref{offZaxis} at hand, we can follow exactly the proof of \cite[Theorem 1.1, pages 1563-1567]{lin2016} to verify that the 
 weak limit $(u, d)$ of $(u_\eps, d_\eps)$ in $L^2_tL^2_x \times L^2_tH^1_x (\Omega\times [0,T])$, for all $0<T<\infty$, is a weak solution of
the Ericksen-Leslie system \eqref{momentum}--\eqref{director} over $(\Omega\setminus\ZZ)\times (0, \infty)$. 
Moreover, since the property of axisymmetric without swirl is preserved under this convergence, we have that $(u, d, P)$ is axisymmetric without swirl in $\Omega\times (0,\infty)$.

\subsection{Possible concentration at the $z$-axis}

\label{subsection:RadialWeakFormulation}
We investigate the weak formulation in Definition \ref{defWeakSolution} in the case of axisymmetry without swirl. The symmetry does not only improve the regularity properties of solutions $(u,d)$ to \eqref{momentum}--\eqref{director} (and $(u_\eps,d_\eps)$ to \eqref{ginzburgLandau} respectively) away from the origin but also restricts the class of \textit{effective test functions}. 
In conclusion, we are left with test functions for Definition \ref{defWeakSolution} given by
$$ \phi = \phi^r(r,z,t) \EE_r + \phi^z(r,z,t) \EE_z,$$
with smooth coefficients $\phi^r,\phi^z$, and $\phi^r(0,z,t)=0$ for all $(0, z) \in \mathbb D$.  This elementary observation is very crucial in the following as possible concentrations appearing in the limit $\eps \to 0$  do not have an impact for the limiting weak solution extended across the $z$-axis.
The divergence-free condition further restricts the structure of test functions. It implies the existence of a smooth stream function $\psi: \Omega \to \R$ (c.f.\ \cite{liu2009}) such that 
\begin{align} \label{streamFunction}
    \phi^r = - \frac{\pa_z \psi}{r}, \qquad \phi^z = \frac{\pa_r  \psi}{r}.
\end{align} 
Hence we are left the verification of \eqref{weakMomentum} just for such functions simplifying the upcoming argument.

\begin{remark}  \label{remark:Concentration}
The key observation that $\phi^r(r|_{r=0},z,t)=0$ implies that the concentration is not seen by test functions applies from another point of view directly to $(d_\eps)_\eps$. As equations  \eqref{momentum}--\eqref{director} and \eqref{ginzburgLandau} only have $\EE_r$ and $\EE_z$ components, we have
\begin{align}
    &\div  \left(\nabla d_\eps \odot \nabla d_\eps - e_\eps (d_\eps) \mathbb{I}_3 \right)    =     (\nabla d_\eps )^\top \left( \Delta d_\eps  + \frac{1-|d_\eps|^2}{\eps^2} d_\eps \right)  = f_\eps^r \EE_r + f^z_\eps \EE_z.
\end{align}
with $L^1$-bounds on $(f^r_\eps,f^z_\eps)$. Although, by the previous subsection, $(f^r_\eps)_\eps$ might concentrate at $r=0$, we have by the axisymmetry of $f^r_\eps$ 
 $$ f_\eps^r \quad \weak^* \quad    f^r + g \mathcal{H}^1\llcorner\ZZ $$
 for some $g \in L^1(\ZZ)$, but
  $$ f_\eps^r   \EE_r  \quad \weak^* \quad f^r  \EE_r   $$
  as measures. Concentration might occur but the limit measure is $0$ due to the mulitplication by $\EE_r$. Either way, the concentration in \eqref{momentumRadial} does not persist in the weak formulation of the limit.
\end{remark}

The concentration  of the right-hand side of the momentum equation in  \eqref{ginzburgLandau}  might still occur in the $\EE_z$-component   \eqref{momentumZ}. Here we employ a cancellation procedure for \eqref{momentumZ}. Hence we verify that the weak formulation of Definition \ref{defWeakSolution} is still satisfied by the limit $(u,d, P)$.

\subsection{Proof of Theorem \ref{thm:weakExistence}}
\label{subsection:ProofThm}

Eventually, we combine the results of the previous subsections to conclude Theorem \ref{thm:weakExistence}. To begin with, standard steps are needed. By the energy law \eqref{energyInequality}, we have the following result:

\begin{lemma}   \label{lemmaAprioriEstimates}
 Under the  assumptions of \eqref{wellpreparedData1} and
\eqref{wellpreparedData2}, the following a-priori estimates hold true independently of $\eps>0$:
\begin{align*}
	 \no{u_\eps}{L^2(0,T; H_0^1(\Omega))} &\leq C, \\
	 \no{u_\eps}{L^\infty(0,T; L^2_{\div} (\Omega) ) }& \leq C, \\
     \no{\partial_t u_\eps}{L^2(0,T; W^{-1,q}_{0,\div}(\Omega))} & \leq C, \\
     \no{d_\eps}{L^\infty(0,T; L^\infty(\Omega))} & \leq 1, \\
	 \no{\nabla d_\eps}{L^\infty(0,T; L^2 (\Omega) ) }&\leq C, \\
	 \no{\Delta d_\eps + \frac{1-|d_\eps|^2}{\eps^2}d_\eps}{L^2(0,T; L^2(\Omega))} &\leq C
\end{align*}
for some $1<q<\infty$.
\end{lemma}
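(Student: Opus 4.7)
The plan is to extract each bound directly from the Ginzburg--Landau energy law \eqref{energyLawGL} and from the equations in \eqref{ginzburgLandau} themselves, together with the uniform control on initial energies provided by \eqref{wellpreparedData1}--\eqref{wellpreparedData2}. Setting $E_0:=\sup_{0<\eps\le 1}\int_\Omega\bigl(\tfrac12|u_{0\eps}|^2+e_\eps(d_{0\eps})\bigr)\,\d x<\infty$, the energy inequality \eqref{energyLawGL} immediately gives the bounds on $\no{u_\eps}{L^\infty_tL^2_x}$, $\no{\nabla u_\eps}{L^2_tL^2_x}$, and $\no{\nabla d_\eps}{L^\infty_tL^2_x}$, each controlled by $\sqrt{2E_0}$. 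The pointwise bound $\no{d_\eps}{L^\infty_tL^\infty_x}\le 1$ is the maximum principle already recalled in Subsection~\ref{subsection:noConcentrationAwayZ}: testing $\eqref{ginzburgLandau}_3$ against $(|d_\eps|^2-1)_+ d_\eps$ produces $\tfrac{d}{dt}\int_\Omega(|d_\eps|^2-1)_+^2\,\d x\le 0$, which forces $|d_\eps|\le 1$ once $|d_{0\eps}|\le 1$, a condition we can arrange in \eqref{wellpreparedData2}. The sixth bound, on $\Delta d_\eps+\eps^{-2}(1-|d_\eps|^2)d_\eps$ in $L^2_tL^2_x$, is obtained by using $\eqref{ginzburgLandau}_3$ to rewrite the expression as $\partial_t d_\eps+(u_\eps\cdot\nabla)d_\eps$ and invoking the dissipation term in \eqref{energyLawGL}.

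The only delicate estimate is the time-derivative bound for $u_\eps$. I plan to test $\eqref{ginzburgLandau}_1$ against a divergence-free $\phi\in W^{1,q'}_{0,\div}(\Omega)$, so that the pressure contribution vanishes, and then bound each of the remaining terms. The hard part is that the Ericksen stress $\nabla d_\eps\odot\nabla d_\eps-\tfrac12|\nabla d_\eps|^2\mathbb{I}_3$ is only in $L^\infty_tL^1_x$, which by itself does not yield a reasonable negative Sobolev bound. To circumvent this I combine the identity $\mathrm{div}\bigl(\nabla d_\eps\odot\nabla d_\eps-\tfrac12|\nabla d_\eps|^2\mathbb{I}_3\bigr)=(\nabla d_\eps)^{\!\top}\Delta d_\eps$ with $\eqref{ginzburgLandau}_3$ and the scalar identity $(\nabla d_\eps)^{\!\top}d_\eps=\tfrac12\nabla|d_\eps|^2$, obtaining
\begin{equation*}
-\mathrm{div}\!\left(\nabla d_\eps\odot\nabla d_\eps-\tfrac12|\nabla d_\eps|^2\mathbb{I}_3\right)=-(\nabla d_\eps)^{\!\top}\!\bigl(\partial_t d_\eps+u_\eps\cdot\nabla d_\eps\bigr)-\nabla\!\left(\frac{(1-|d_\eps|^2)^2}{4\eps^2}\right).
\end{equation*}
The gradient piece is absorbed into a modified pressure $\tilde P_\eps:=P_\eps+\tfrac{(1-|d_\eps|^2)^2}{4\eps^2}$, and the remaining term is a product of $\nabla d_\eps\in L^\infty_tL^2_x$ with $\partial_t d_\eps+u_\eps\cdot\nabla d_\eps\in L^2_tL^2_x$, hence lies in $L^2_tL^1_x$, at the price of forcing test functions into $L^\infty_x$.

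To accommodate this requirement, I pick $q\in(1,3/2)$ so that $q'>3$ and three-dimensional Sobolev embedding gives $W^{1,q'}_0(\Omega)\hookrightarrow L^\infty(\Omega)$. Then the viscous contribution is controlled by $\no{\nabla u_\eps}{L^2_x}\no{\nabla\phi}{L^{q'}_x}$ with an $L^2_t$ time factor; the convection by $\no{u_\eps}{L^3_x}^{2}\no{\nabla\phi}{L^{q'}_x}$, where $u_\eps\in L^4_tL^3_x$ follows by Gagliardo--Nirenberg from $L^\infty_tL^2_x\cap L^2_tL^6_x$ in three dimensions, giving again an $L^2_t$ time factor; and the rewritten Ericksen contribution by $\no{\nabla d_\eps}{L^2_x}\no{\partial_t d_\eps+u_\eps\cdot\nabla d_\eps}{L^2_x}\no{\phi}{L^\infty_x}$, likewise in $L^2_t$. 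Summing and taking supremum over $\phi$ in the unit ball of $W^{1,q'}_{0,\div}$ yields $\no{\partial_t u_\eps}{L^2(0,T;W^{-1,q}_{0,\div})}\le C$. The entire difficulty is concentrated in the algebraic rewrite above; everything else is a direct readoff of the energy inequality combined with standard Sobolev interpolation.
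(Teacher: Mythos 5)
Your proof is correct and takes essentially the same route as the paper, whose own proof simply invokes the energy inequality \eqref{energyLawGL} and the maximum principle and defers the details (in particular the duality estimate for $\partial_t u_\eps$) to \cite{kortum2020}. Your rewriting of $\div(\nabla d_\eps\odot\nabla d_\eps-\tfrac12|\nabla d_\eps|^2\mathbb{I}_3)$ via $\eqref{ginzburgLandau}_3$, with the Ginzburg--Landau potential gradient absorbed into the pressure, is exactly the standard device needed to place $\partial_t u_\eps$ in $L^2(0,T;W^{-1,q}_{0,\div}(\Omega))$ with $q<3/2$.
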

\begin{proof}
    This follows from \eqref{energyLawGL} and the maximum principle (see \cite{kortum2020} for the details).
\end{proof}
From here and the Aubin-Lions Lemma, we deduce the existence of a subsequence with a weak limit $(u,d)$ such that 
\begin{align*}
	 (u_\eps,d_\eps)  \quad	&\weak^*  \quad (u,d)&& \text{ in } C_{\rm weak}([0,T],L^2(\Omega)), \\
	 (u_\eps,d_\eps) \quad & \to 				\quad (u,d) && \text{ in } L^2(\Omega \times (0,T)), \\
	 (u_\eps(t), d_\eps(t))  \quad & \to  \quad (u(t),d(t)) && \text{ in } L^2(\Omega) \text{ for a.e. } t \in (0,T).
\end{align*}

In what follows, we first verify the weak formulation \eqref{weakMomentum} for the limit $(u,d)$ for solenoidal  test functions $ \phi$ without swirl and with $\phi^z(0,z,t)=0$. In a second step, we perform a cut-off procedure and also show the weak identity for $\phi^z(0,z,t) \neq 0$.

By Fatou's lemma and Lemma \ref{lemmaAprioriEstimates} (see \cite{du2022,kortum2020}), the set 
$A \subset (0,T)$ has full measure such that for all $t \in A$
\begin{align*}
    \liminf_{\eps\to 0} \left(  \no{u_\eps(t)}{H_0^1(\Omega)} 
    + \no{\partial_t u_\eps(t)}{ W^{-1,q}_{0,\div}(\Omega)} +\no{\tau_\eps(t)}{L^2(\Omega)}   \right) <+\infty
\end{align*}
where $\tau_\eps =\Delta d_\eps + \frac{1-|d_\eps|^2}{\eps^2}d_\eps $.
Hence for every $t \in A$, we test $\eqref{ginzburgLandau}_1$ by $\phi(\cdot,t)$ and passing to the limit allows to deduce 
\begin{align} \label{weakRadialLimit}
	\langle \partial_t u , \phi \rangle +  \int_\Omega & \left[    (u\cdot \nabla ) u \cdot \phi(t) + \nabla u : \nabla \phi(t)   +(\nabla d)^\top \Delta d \cdot \phi(t)  \right] \d x \d t =0 
\end{align}
by the non-concentration results away from $\ZZ$ of Section \ref{subsection:noConcentrationAwayZ} and $\phi|_{r=0}=0$.  As $A$ has full measure, the weak formulation \eqref{weakMomentum} holds true by integration by parts.

 Now, we turn our attention towards removing the condition $\phi^z|_{r=0}=0$. In this case, we  employ a method  phrased concentration cancellation and follow ideas illustrated in \cite{frehse1982} and \cite{evans1990}.  
To begin with, let us highlight the existence of suitable cut-off functions in order to conclude the limit passage (see e.g.\ \cite{lopes1999}).

\begin{lemma}  \label{lemmaCutOffFunctions} There exists a family of smooth  radial functions $\eta_k: \R^2 \to  \R$ such that 
	      \begin{align*}
	   				& \eta_k \to 0  &&\text{a.e.\ and in any } L^p(\R^2), \\
	   				& \eta_k \equiv 1  &&\text{on } B_{1/k}(0) \text{ and} \\
	   				& \nabla \eta_k \to 0 && \text{in } L^2(\R^2) 
	   \end{align*}
	   as $k$ tends to $\infty$.
\end{lemma}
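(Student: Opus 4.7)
My plan is to exploit the classical fact that a single point in $\R^2$ has vanishing $2$-capacity, which supplies logarithmic cut-off functions equal to $1$ on a small disk whose Dirichlet energy can be made arbitrarily small. I would fix once and for all a smooth non-increasing template $\varphi \in C^\infty(\R)$ with $\varphi \equiv 1$ on $(-\infty,0]$ and $\varphi \equiv 0$ on $[1,\infty)$, and define
\[
\eta_k(x) := \varphi\!\left(1 + \frac{\log|x|}{\log k}\right) \quad \text{for } x \in \R^2\setminus\{0\},
\]
extended by $\eta_k(0):=1$. Since the argument of $\varphi$ is $\leq 0$ on $B_{1/k}(0)$ and $\geq 1$ outside $B_1(0)$, the resulting $\eta_k$ is smooth (it is constant in a neighbourhood of the origin), radial, identically $1$ on $B_{1/k}(0)$ and supported in $\overline{B_1(0)}$.

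Next I would verify the three required convergences in order. For each fixed $x \neq 0$, the bound $\log|x|/\log k \to 0$ together with $\varphi(1)=0$ gives $\eta_k(x) \to 0$, and since $0 \leq \eta_k \leq \mathbf{1}_{B_1(0)}$, dominated convergence then yields $\eta_k \to 0$ in every $L^p(\R^2)$ with $p \in [1,\infty)$. The key computation is the gradient estimate: on the annulus $\{1/k < |x| < 1\}$ one has
\[
|\nabla \eta_k(x)|^2 = \frac{1}{(\log k)^2 \,|x|^2}\left|\varphi'\!\left(1 + \tfrac{\log|x|}{\log k}\right)\right|^2,
\]
so passing to polar coordinates and substituting $t = 1 + \log r/\log k$ collapses the integral to
\[
\int_{\R^2}|\nabla\eta_k|^2\,dx = \frac{2\pi}{\log k}\int_0^1 |\varphi'(t)|^2\,dt,
\]
which tends to $0$ as $k\to \infty$.

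There is no real obstacle here; the only point requiring care is the \emph{logarithmic} choice of the transition scale. A naive linear cut-off supported on an annulus of width $\sim 1/k$ would leave $\|\nabla\eta_k\|_{L^2(\R^2)}$ bounded away from zero, so the specific log-scale cancellation between $\int_{1/k}^1 r^{-1}\,dr = \log k$ and the prefactor $(\log k)^{-2}$ is precisely what realises the vanishing $2$-capacity of the origin and delivers the decay. Smoothness at $x=0$ is automatic since $\eta_k$ is constant on $B_{1/k}(0)$.
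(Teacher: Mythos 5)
Your construction is correct and follows essentially the same route as the paper: both invoke the vanishing $2$-capacity of a point in $\R^2$ and realize it by an explicit logarithmic cut-off whose Dirichlet energy decays like $(\log k)^{-1}$. The only difference is cosmetic — you obtain smoothness by composing a smooth template with $\log|x|$, whereas the paper writes the piecewise logarithmic profile on $[1/k,1/\sqrt{k}]$ and then mollifies.
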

\begin{proof} This follows directly from the fact that isolated points have vanishing 2-capacity in $\R^2$ (c.f.\ \cite{evans2015}).  However, one might be more explicit and consider
$\eta_k:\R^2 \to \R$  defined by 
 $$ \eta_k(r) = \begin{cases} 
	   		1, & \text{if } r \leq \frac1k, \\
	   		\frac{-\log \left( \sqrt{k}r\right)}{\log \sqrt{k}}, & \text{if } \frac1k \leq r \leq \frac{1}{\sqrt{k}}, \\
	   		0, & \text{if } r \geq \frac{1}{\sqrt{k}}.
	   \end{cases}$$
Convolution with a standard convolution kernel of small enough $k$-dependent support then yields the result.
\end{proof}
Eventually, we show that the weak formulation of \eqref{momentumZ} is satisfied in the limit. In order to do so, recall that the effective test  functions are given of the form 
\begin{align*}
  \Phi =     - \frac{\pa_z \psi}{r} \EE_r + \frac{\pa_r  \psi}{r} \EE_z
\end{align*}
for smooth scalar $\psi=\psi(r,z,t)$. In order to perform the cut-off, we take stream functions 
$$ \psi_k = \int_0^r (1-\eta_k) \partial_r \psi \d s $$
which leads to 
\begin{align*}
    \Phi_k (r,z,t)=  -  \frac{\pa_z \psi_k}{r} \EE_r +  (1- \eta_k) \frac{\pa_r  \psi}{r} \EE_z
\end{align*}
By Lemma \ref{lemmaCutOffFunctions} and the independence in time of $\eta_k$, one easily checks that 
\begin{align}
    \Phi_k \quad &\to \quad \Phi   & &\text{ in } C([0,T], H^1_0(\Omega)), \notag \\
   \partial_t \Phi_k  \quad &\to \quad \partial_t \Phi   &&\text{ in } C([0,T], H^1_0(\Omega)), \label{convergenceTestFunction} \\
    \frac{\partial_z \psi_k}{r}  \quad &\to  \quad  \frac{\partial_z \psi}{r}&& \text{ uniformly in a neighborhood around } \ZZ . \notag
\end{align}
 Thus, using $\Phi_k$ as legitimate test function in \eqref{defWeakSolution}, we have
\begin{align*}
&\int_0^\infty \int_\Omega\left[ -  u \cdot \partial_t \Phi_k +( u \cdot \nabla u)\cdot  \Phi_k+ \nabla u \cdot \nabla \Phi_k  \right]  \d x \d t  \\
& =- \int_0^\infty \int_\Omega \left( (\nabla d)^\top \Delta d\right) \cdot    \Phi_k  \d x \d t +\int_\Omega u_0 \cdot  \Phi_k(\cdot, 0) \d x
\end{align*}
 for every $k \in \N$. This is however only useful in case we are capable of letting $k \to \infty$ in an appropriate way. Thus, we  check convergence properties of all terms to conclude the weak formulation. Each term is considered separately:

By Lemma \ref{lemmaAprioriEstimates}, we have $(\nabla d)^\top \Delta d \in L^2_tL^1_x( \Omega\times (0,\infty))$.  With respect to \eqref{convergenceTestFunction}, we exploit the Dominated Convergence Theorem (in particular for $\Phi_k^z$) which yields
		  $$ \int_0^\infty \int_\Omega \left( (\nabla d)^\top \Delta d \right)\cdot    \Phi_k    \quad \to \quad  \int_0^\infty \int_\Omega  \left( (\nabla d)^\top \Delta d \right)\cdot   \Phi.$$
Next, it holds  
	   $$  \int_0^\infty \int_\Omega  u \cdot \partial_t \Phi_k  \quad \to \quad  \int_0^\infty \int_\Omega  u \cdot  \partial_t \Phi$$
	   by Lemma \ref{lemmaAprioriEstimates} and $u \in L^2(\Omega \times (0,T))$. Again by the Dominated Convergence Theorem, we have 
	    $$  \int_0^\infty \int_\Omega  (u\cdot \nabla u) \cdot \Phi_k  \quad \to \quad  \int_0^\infty \int_\Omega  (u \cdot \nabla u) \cdot  \Phi.$$
 Due to \eqref{convergenceTestFunction}, $\nabla u \in L^2_{t,x} ( \Omega \times (0,\infty))$, and Lemma \ref{lemmaCutOffFunctions}, it holds 
	   $$  \int_0^\infty \int_\Omega \nabla u : \nabla \Phi_k  \quad  {\rightarrow} \quad \int_0^\infty \int_\Omega \nabla u : \nabla \Phi$$
and finally  $\displaystyle\int_\Omega  u_0 \cdot  \Phi_k(\cdot,0) ~ \to ~\int_\Omega u_0 \cdot \Phi(\cdot,0).$

In total, it holds
\begin{align*}
\int_0^\infty \int_\Omega& \left[ -  u \cdot  \partial_t \phi +(u \cdot \nabla) u\cdot \phi + \nabla u : \nabla \phi \right]  \d x \d t  \\
& = -\int_0^\infty \int_\Omega (\nabla d)^\top \Delta d \cdot \phi \d x \d t+\int_\Omega u_0 \cdot \phi(\cdot,0) \d x
\end{align*}
for every smooth solenoidal $\phi =\phi^r \EE_r +\phi^z \EE_z$, i.e.\ the weak formulation \eqref{weakMomentum}  is valid. \\
Regarding \eqref{weakDirector}, we sketch the argument which relies on an argument by \cite{chen1989} (see e.g.\ \cite{kortum2020} for details). Taking the vector product of the director equation of \eqref{ginzburgLandau} by $d_\eps$, we have
 \begin{align*}
      d_\eps \times (\partial_t d_\eps + (u_\eps \cdot \nabla ) d_\eps  - \Delta d_\eps) =0.
 \end{align*}
Realizing that $ d_\eps \times \Delta d_\eps  = \div ( d_\eps \times \nabla d_\eps)$ holds true, we pass to the limit by standard arguments. Taking once more the cross product of the resulting equation by $d$ and using $d\times d \times \Delta d = - \Delta d - |\nabla d |^2 d$, since $|d|=1$, we obtain \eqref{weakDirector}. \\
The energy inequality \eqref{energyInequality} is a consequence of \eqref{energyLawGL} and the strong convergence of initial data. This concludes the proof of Theorem \ref{thm:weakExistence}.

\section{Weak compactness of solutions of axisymmetric EL system}

This section is devoted to the proof of Theorem \ref{thm:weakCompactness} on the weak compactness of axisymmetric without swirl solutions of the Ericksen-Leslie system
\eqref{momentum}--\eqref{director}.

First it follows from the Definition 1.1 that $(u_n, d_n)$ satisfies the energy inequality \eqref{energyInequality} so that 
\begin{align}\label{energyInequality1}
    \sup_{0\le t\le T}\int_\Omega e(u_n,d_n)(x,t)\,\d x&+
    \int_{Q_T}(|\nabla u_n|^2+
    |\Delta d_n+|\nabla d_n|^2 d_n|^2\,\d x \d t\nonumber\\
    &\le \int_\Omega e(u_{0n},d_{0n})\,\d x\le C(u_0,d_0),
\end{align}
where $\displaystyle e(u,d)=\int_\Omega (|u|^2+|\nabla d|^2)\,dx$ denotes the total energy of $(u,d)$, and $C(u_0, d_0)$ denotes a positive constant depending on the total energy of $(u_0, d_0)$.

Write $u_n(r,\theta,z,t)=u_n^r(r,z,t)\EE_r
+u^z(r,z,t)\EE_z$ and
$d_n(r,\theta, z, t)
=\sin\phi_n (r,z,t)\EE_r+\cos\phi_n(r,z,t)\EE_z$.
It follows from equation \eqref{director} that $\phi_n$ satisfies
\begin{equation}\label{phiEquation}
\partial_t\phi_n +b_n\cdot\nabla\phi_n-L\phi_n+\frac{\sin 2\phi_n}{2r^2}=0,    
\end{equation}
where $b_n=u^r_n\EE_r+u^z_n\EE_z$
and $L=\partial^2_r+r^{-1}\partial_r+\partial^2_z$. 

From \eqref{energyInequality1} and \eqref{phiEquation}, one can see that $\nabla^2\phi_n\in L^2((0,T),
\Omega_\delta)$ for any $\delta>0$,
where
$\Omega_\delta=\big\{(r,\theta,z)\in\Omega:
r>\delta\big\}$. This implies that for
a.e. $t\in (0,T)$, $d_n(\cdot, t)$ is
a {\it suitable} approximated harmonic map (see \cite[Definition 5.1]{lin2016}) on $\Omega_+=\Omega\cap\big\{r>0\big\}$ with
$L^2$-tension field $\tau_n=
(\partial_t d_n+u_n\cdot\nabla d_n)(\cdot, t)$ (see \cite[Remark 5.2]{lin2016}). In particular, for any $\Lambda>0$, if we define
$$
\mathcal{H}_\Lambda^T=\Big\{t_0\in (0, T):\ \Lambda(t_0)=\liminf_{n\to \infty}
\int_\Omega |\tau_n(x, t_0)|^2\,dx\le \Lambda\Big\},
$$
then 
$$\mathcal{L}^1((0,T)\setminus \mathcal{H}_\Lambda^T)\le \frac{E_0}{\Lambda}.$$
Similar to the Section \ref{section:GL}, by applying \cite[Lemma 5.3, Lemma 5.4]{lin2016}, and the first part of proof of \cite[Theorem 7.1]{lin2016} we have that for any $t_0\in\mathcal{H}_\Lambda^T$ and  $p_0=(r_0,\theta_0, z_0)\in\Omega_+$, if $0<r\le R<\min\big\{r_0,
    {\rm{dist}}(p_0, \partial\Omega_+)\big\}$, then 
    \begin{equation}
     \Psi_r(d_n(t_0), p_0)
     \le \Psi_R(d_n(t_0), p_0),
    \end{equation}
    where
    $$
    \Psi_r(d_n(t_0), p_0)
    =\frac{1}{r}\int_{B_r(p_0)} \left(\frac12|\nabla d_n(t_0)|^2-\langle (p-p_0)\nabla d_n(t_0), \tau_n\rangle\right)+\frac12\int_{B_r(p_0)}|p-p_0||\tau_n|^2.
    $$
    In particular, it holds that
    \begin{equation}\label{almost_monotonicity}
     r^{-1}\int_{B_r(p_0)} |\nabla d_n(t_0)|^2
     \le 2R^{-1}\int_{B_R(p_0)}|\nabla d_n(t_0)|^2
     +4\Lambda R.
    \end{equation}
If we define the energy concentration set
of $d_n(t_0)$ in $\Omega_+$ by
\[
\Sigma_{t_0}^+
=\bigcap_{0<r<{\rm{dist}}(p_0,\partial\Omega_+)}\Big\{
p_0\in\Omega_+: \ \liminf_{n\to\infty}
r^{-1}\int_{B_r(p_0)} |\nabla d_n(t_0)|^2>\varepsilon_0^2\Big\},
\]
then $\Sigma_{t_0}^+$ is a closed set with
$\mathcal{H}^1(\Sigma_{t_0}^+\cap K)<\infty$
for any compact set $K\subset\mathbb R^3$,
and there exists an approximated harmonic map
$d:\Omega\to\mathbb S^2$ with tension field
$\tau=\displaystyle\lim_{n\to\infty} \tau_n$ weakly in $L^2(\Omega)$ such that, after passing to a subsequence, $d_n(\cdot,t_0)\rightharpoonup d$
in $H^1(\Omega)$, and
$$d_n(\cdot, t_0)\rightarrow d 
\ \ {\rm{in}}\ \ H^1_{\loc}(\Omega_+\setminus\Sigma_{t_0}^+),$$
and
$$|\nabla d_n(t_0)|^2\,\d x\rightharpoonup |\nabla d(t_0)|^2\, \d x$$
as convergence of Radon measures on $\Omega_+\setminus\Sigma_{t_0}^+$. 

Next we need
\begin{lemma}\label{nonconcentration}
Under the same notations as above. For any $t_0\in \mathcal{H}_\Lambda^T$, $\Sigma_{t_0}^+=\emptyset$.
\end{lemma}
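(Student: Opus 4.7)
The plan is to mimic the blow-up argument of Theorem \ref{offZaxis} but directly for the Ericksen–Leslie solutions (no Ginzburg–Landau parameter needed), using the axisymmetry off the $z$-axis to convert a hypothetical energy concentration into a finite-energy harmonic function on $\mathbb R^2$, which by Liouville must be constant.

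Suppose for contradiction $\Sigma_{t_0}^+\neq\emptyset$ and pick $p_0=(r_0,\theta_0,z_0)\in\Sigma_{t_0}^+$ with $r_0>0$. Axisymmetry gives $\Sigma_{t_0}^+=\mathcal{S}_{t_0}^+\times\mathbb S^1$ with $\mathcal{S}_{t_0}^+=\Sigma_{t_0}^+\cap\mathbb D$, and $\mathcal{H}^1$-finiteness on compacts forces $\mathcal{S}_{t_0}^+$ to be locally finite, so I may fix $r_1>0$ with $\mathcal{C}^2_{r_1}(r_0,z_0)\cap\mathcal{S}_{t_0}^+=\{(r_0,z_0)\}$. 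Following the exhaustion scheme \eqref{max_exhau}--\eqref{max_exhau1} applied to $|\nabla_{r,z}\phi_n|^2$, select scales $\lambda_n\to 0$ and centers $(r_n,z_n)\to(r_0,z_0)$ with
\[
\int_{\mathcal{C}^2_{\lambda_n}(r_n,z_n)}|\nabla_{r,z}\phi_n|^2\,r\,\d r\,\d z=\frac{\varepsilon_0^2}{C_*},
\]
where $C_*$ is large enough that every unit scale after rescaling sits strictly below the $\varepsilon_0$-regularity threshold for approximated harmonic maps.

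Set $\psi_n(r,z):=\phi_n(r_n+\lambda_n r,\,z_n+\lambda_n z)$ on $D_n:=\lambda_n^{-1}(\mathcal{C}^2_{r_1/2}(r_0,z_0)-(r_n,z_n))$, which exhausts $\mathbb R^2$. The scaled form of \eqref{phiEquation} reads
\[
-(\partial_r^2+\partial_z^2)\psi_n-\frac{\lambda_n}{r_n+\lambda_n r}\partial_r\psi_n+\frac{\lambda_n^2\sin 2\psi_n}{2(r_n+\lambda_n r)^2}=\widetilde\tau_n,
\]
with $\|\widetilde\tau_n\|_{L^2(D_n)}\le\lambda_n\sqrt{\Lambda(t_0)}\to 0$ because $t_0\in\mathcal{H}_\Lambda^T$. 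Since $r_n\to r_0>0$, both lower-order coefficients vanish locally uniformly. The almost-monotonicity \eqref{almost_monotonicity} yields a uniform $H^1_{\loc}(\mathbb R^2)$-bound on $\psi_n$; a Calder\'on–Zygmund argument identical to \eqref{blowup3}--\eqref{l4-bound}, applied to the Poisson equation for $\eta^2(\psi_n-\overline{\psi_n})$, upgrades this to a $W^{1,4}_{\loc}$-bound. Extracting a subsequence, $\psi_n\rightharpoonup\psi$ in $H^1_{\loc}(\mathbb R^2)$ with $\Delta\psi=0$ and $\int_{\mathbb R^2}|\nabla\psi|^2\le C(E_0+\Lambda)<\infty$, so $\psi$ is constant by Liouville. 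The contradiction closes by passing the rescaled energy to the limit on $\mathcal{C}^2_1(0,0)$:
\[
\frac{\varepsilon_0^2}{C_*}=\lim_{n\to\infty}\int_{\mathcal{C}^2_1(0,0)}|\nabla_{r,z}\psi_n|^2(r_n+\lambda_n r)\,\d r\,\d z=\int_{\mathcal{C}^2_1(0,0)}|\nabla\psi|^2\, r_0\,\d r\,\d z=0.
\]

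The main obstacle is the upgrade from weak to strong $H^1_{\loc}$-convergence of $\psi_n\to\psi$, which is what transports the positive concentrated energy into the vanishing limit energy. With no Ginzburg–Landau penalty available to invoke \eqref{potential-conv}, I would test the rescaled equation against $\eta^2(\psi_n-\psi)$ with $\eta$ a standard cutoff; the right-hand side $\widetilde\tau_n\to 0$ in $L^2$, the potential-type term is $O(\lambda_n^2/r_0^2)$ uniformly, and harmonicity of $\psi$ together with weak convergence $\nabla\psi_n\rightharpoonup\nabla\psi$ allow the cross-term to vanish, yielding $\int\eta^2|\nabla(\psi_n-\psi)|^2\to 0$. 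The axisymmetric no-swirl structure is what makes this work: the blow-up sees the director through the scalar angle $\psi$ (equivalently, an $\mathbb S^1$-valued target), so there is no energy gap obstruction, and the Liouville step is genuinely available, in contrast to the $\mathbb S^2$ case.
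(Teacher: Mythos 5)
Your proposal is correct and follows essentially the same route as the paper: blow up at a hypothetical concentration point off the axis at the chosen scale $\lambda_n$, identify the limit as an entire harmonic function with finite Dirichlet energy, invoke Liouville, and contradict the energy $\varepsilon_0^2/C_*$ retained at unit scale. The only difference is a technical sub-step: the paper obtains the needed strong $H^1_{\loc}(\R^2)$ convergence directly from interior $W^{2,2}$ estimates (the rescaled right-hand side is $O(\lambda_n)$ in $L^2$) and Rellich, whereas you achieve the same by testing the rescaled equation against $\eta^2(\psi_n-\psi)$; your $W^{1,4}_{\loc}$ upgrade is then not actually needed.
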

\begin{proof}

Write $d_n(t)=\sin\phi_n(r,z,t)\EE_r +\cos\phi_n(r,z,t)\EE_z$,
and $u_n(t)=u^r_n(r,z,t)\EE_r+u_n^z(r,z,t)\EE_z$. Denote
by $\phi_n(r,z)=\phi_n(r,z,t_0)$ and $u_n(r,z)=u_n(r,z,t_0)$.
Then we have
\begin{equation}\label{phiEqn}
L\phi_n-\frac{\sin 2\phi_n}{2r^2}=\widehat{\tau}_n:=\partial_t \phi_n(\cdot, t_0)+u_n^r \partial_r \phi_n+u^z_n\partial_z\phi_n 
\ \ \ {\rm{in}}\ \ \ \mathbb D_+,
\end{equation}
where $\mathbb D_+=\big\{(r,z): \ (r,0,z)\in\Omega_+\big\},$
and 
$$\|\widehat{\tau}_n\|_{L^2(\mathbb D_+)}
=\frac1{2\pi}\|\tau_n(t_0)\|_{L^2(\Omega_+)}\le \Lambda^\frac12.$$

Similar to Section \ref{section:GL} above, for simplicity we may assume that
$\Sigma_{t_0}=\big\{(r_0, 0, z_0)\big\}\times \mathbb S^1$ for a point $(r_0,z_0)\in\mathbb D_+.$ Recall the energy density of
$d_n$ is given by
$$
\frac12|\nabla d_n|^2=e(\phi_n):=\frac12\left(|\nabla_{r,z}\phi_n|^2+\frac{\sin^2\phi_n}{r^2}\right).
$$
Hence by the definition of $\Sigma_{t_0}^+$ we have 
\begin{align}
\eps_0^2&<\liminf_{n\to\infty} 
\frac{1}{r}\int_{\mathcal{C}^3(r_0,0,z_0)} e(\phi_n) r\d r\d \theta dz\nonumber\\
&=\liminf_{n\to\infty} 
\int_{\mathcal{C}^2(r_0,0,z_0)} e(\phi_n) r\d r\d z,
\quad  0<r<\frac{r_0}{16}. 
\end{align}

As in \eqref{max_exhau} and \eqref{max_exhau1}, if we define the maximum concentration function of $\phi_n$ by
\begin{align}\label{max_exhau4}
    \Theta_n((r_0,z_0);\lambda)
    =\max\Big\{\int_{\mathcal{C}^2_\lambda(r_*,z_*)} e(\phi_n)(r,z)r\d r\d z: (r_*,z_*)\in\mathcal{C}^2_{\frac{r_0}{32}}(r_0, z_0)\Big\},
\end{align}
then for a sufficiently large $C_*>0$ there are $\lambda_n\in (0,\frac{r_0}{32})$ and $(r_n, z_n)\in \mathcal{C}^2_{\frac{r_0}{32}}(r_0, z_0)$ such that
\begin{align}\label{max_exhau5}
\frac{\varepsilon_0^2}{C_*}=\Theta_n((r_n,z_n), \lambda_n)
=\int_{\mathcal{C}^2_{\lambda_n}(r_n,z_n)}e(\phi_n)(r,z)r\d r\d z.
\end{align}
Arguing as in Lemma 2.2, it holds that
\begin{align}\label{blowScale}
\lim_{n\to\infty}\lambda_n=0
\ \ {\rm{and}}\ \
\lim_{n\to\infty}(r_n,z_n)=(r_0,z_0).
\end{align}
Define the blowup sequence 
$$\psi_n(r,z)=\phi_n(r_n+\lambda_n r,
z_n+\lambda_n z),  \ \forall (r,z)\in \mathbb{D}_n:=\lambda_n^{-1}\big(\mathbb D_+\setminus\{(r_n,z_n)\}\big).$$ 
It follows from equation \eqref{phiEqn} that $\psi_n$ solves
\begin{align}\label{psiEqn}
    L_n\psi_n=\frac{\lambda_n^2\sin 2\psi_n}{2(r_n+\lambda_n r)^2}
    +\widetilde{\tau}_n \ \ {\rm{in}}
\ \ \mathbb{D}_n,
\end{align}
where
$$
L_n=\partial^2_r+\frac{\lambda_n}{r_n+\lambda_n r} \partial_r+\partial^2_z,
$$
and
$$
\widetilde{\tau}_n(r,z)
=\lambda_n^2\widehat{\tau}_n(r_n+\lambda_n r,
z_n+\lambda_n z).
$$
Furthermore, \eqref{max_exhau5} yields
\begin{align}\label{max_exhau7}
&\frac{\varepsilon_0^2}{C_*}
=\int_{\mathcal{C}^2_{1}(0,0)}e(\psi_n)(r,z)(r_n+\lambda_n r)\d r\d z\nonumber\\
&\ge \int_{\mathcal{C}^2_{1}(r_*,z_*)}e(\psi_n)(r,z)(r_n+\lambda_n r)\d r\d z, \ \forall (r_*,z_*)\in \mathbb D_n.
\end{align}
Here
$$
e(\psi_n)(r,z)=\frac12\left(|\nabla_{r,z}\psi_n|^2+\frac{\lambda_n^2}{(r_n+\lambda_n r)^2}\sin^2\psi_n\right)(r,z)
$$
On the other hand, similar to \eqref{total_energy}, we can derive
\begin{align}\label{total_energy1}
\int_{\mathcal{C}^2_R(0,0)} e(\psi_n)(r,z)(r_n+\lambda_n r)\,\d r\d z
\le C(E_0+\Lambda), \ \ \forall R>0.
\end{align}
From \eqref{blowScale}, \eqref{psiEqn}, and \eqref{max_exhau7}, we can write
\begin{equation}\label{psinEqn}
(\partial_r^2+\partial^2_z)\psi_n 
=f_n:=\frac{\lambda_n^2\sin 2\psi_n}{2(r_n+\lambda_n r)^2}
    +\widetilde{\tau}_n-\frac{\lambda_n}{r_n+\lambda_n r}\partial_r \psi_n \ \ {\rm{in}}\ \ \mathbb D_n,
\end{equation}
with
$$
\int_{B_R^2}e(\psi_n)\,\d r\d z\le C(R, r_0),
\ \ {\rm{and}}\ \ \big\|f_n\big\|_{L^2(B_R^2)}\le C(R,r_0)\lambda_n, \ \forall R>0.
$$
Hence by $W^{2,2}$-estimates (see \cite{gilbarg2001}), one has
$$
\int_{B_R^2} |\nabla^2_{r,z}\psi_n|^2\,\d r\d z\le C(R,r_0),\ \forall R>0.
$$
Thus we may assume that there exists $\psi\in H^{1}_{\rm{loc}}(\mathbb R^2)$ such that $\psi_n\rightarrow \psi$ in $H^1_{\rm{loc}}(\mathbb R^2)$. Passing to the limit in equation \eqref{psinEqn} and condition \eqref{max_exhau7}, we have that
\begin{equation}\label{harmonic}
(\partial_r^2+\partial^2_z)\psi=0 \ \ {\rm{in}}\ \ \R^2,
\end{equation}
and 
\begin{equation}\label{nontrivial}
\int_{\mathcal{C}^2_1(0,0)}(|\partial_r\psi|^2+|\partial_z\psi|^2)\,drdz=
\frac{\eps_0^2}{r_0C_*}>0.
\end{equation}
On the other hand, by \eqref{total_energy1} we have that
\begin{equation}\label{total_energy2}
\int_{\R^2}(|\partial_r\psi|^2+|\partial_z\psi|^2)\,drdz\le \frac{C(E_0+\Lambda)}{r_0}<\infty.
\end{equation}
It is readily seen that \eqref{harmonic} and \eqref{total_energy2} imply
that $\psi$ must be a constant, which contradicts to \eqref{nontrivial}.
This proves that $\Sigma_{t_0}^+=\emptyset$.
\end{proof}

Now we can complete the proof of Theorem \ref{thm:weakCompactness}.
\begin{proof} First, for $\mathcal{L}^1$ a.e. $t_0\in (0, T)$, we have that $\Lambda(t_0)<\infty$ so that Lemma \ref{nonconcentration} yields that $\Sigma_{t}^+=\emptyset$ for a.e. $t\in (0,T)$. With this at hand, we can first argue exactly as in the proof of Theorem \ref{thm:weakExistence} given in Section \ref{subsection:noConcentrationAwayZ} to conclude that the $L^2_tL^2_x\times L^2_tH^1_x$-weak limit map $(u, d):\Omega\times (0,T)\to \mathbb R^3\times \mathbb S^2$, of $(u_n, d_n)$, satisfies equation \eqref{weakMomentum} for all test functions $\phi\in C^\infty_{0,{\rm{div}}}(\Omega_+\times [0,T))$. Then we can apply the concentration-cancellation property given by Section \ref{subsection:ProofThm} to show that $(u, d)$ satisfies equation \eqref{weakMomentum} for all test functions $\phi\in C^\infty_{0,{\rm{div}}}(\Omega\times [0,T))$. While it is not hard to verify that $(u,d,P)$
also satisfies equation \eqref{weakDirector}, the initial and boundary value
$(u_0, d_0)$, as well the energy inequality \eqref{energyInequality}. Hence by Definition \ref{defWeakSolution} $(u,d,P)$ is a weak solution of the equations \eqref{momentum}--\eqref{director}
and the initial-boundary condition \eqref{iniData}-\eqref{boundaryData}. Again, since the property of axisymmetric without swirl is preserved under this convergence, we conclude
that $(u, d,P)$ is also axisymmetric without swirl.
\end{proof}

\section*{Appendix}

In this section, we will sketch the construction of an axisymmetric with no swirl weak solution
to \eqref{ginzburgLandau}, 
\eqref{initialDataGL} and \eqref{boundaryDataGL}.
The argument is based on suitable extensions to the axisymmetric setting of the Galerkin method outlined by \cite{lin1995}. 

Let us first introduce two function spaces on
an axisymmetric bounded smooth domain $\Omega\subset\R^3$. Namely,
\[
\mathbb H_{\rm{sym}} 
=\Big\{\ v\in L^2(\Omega,\R^3):
\ v \mbox{ is divergence-free and axisymmetric without swirl }, v\cdot n \big|_{\partial \Omega} =0\Big\},
\]
and
\[
\mathbb V_{\rm{sym}} 
=\Big\{\ v\in H^1_0(\Omega,\R^3):
\ v \mbox{ is divergence-free and axisymmetric without swirl }\Big\}.
\]
Observe that if we denote
\[
\mathbb X=\Big\{v\in C_0^\infty(\Omega,\R^3):
\ v \ \mbox{is divergence-free and
axisymmetric without swirl} \Big\},
\]
then $\mathbb H_{\rm{sym}}$ is the closure of
$\mathbb X$ in $L^2(\Omega)$,
and $\mathbb V_{\rm{sym}}$ is the closure of
$\mathbb X$ in $H^1_0(\Omega)$.
Denote the dual space of $\mathbb V_{\rm{sym}}$
by $\mathbb V_{\rm{sym}}'$.

We begin with the following Lemma on the orthonormal basis of $\mathbb{H}_{\rm{sym}}$ by
eigenfunctions of the Stokes operator.

\begin{lemma} There exists an orthonormal basis 
of $\mathbb{H}_{\rm{sym}}$ consisting of
eigenfunctions $\phi_i\in C^\infty(\Omega,\R^3)\cap \mathbb{V}_{\rm{sym}}$, $i=1, 2,...$, of the Stokes equation on $\Omega$:
\[
\begin{cases}
-\Delta \phi_i+\nabla P_i=\lambda_i \phi_i
\ &{\rm{in}}\ \Omega,\\
\nabla\cdot\phi_i=0 \ &{\rm{in}}\ \Omega,\\
\phi_i=0 \ &{\rm{on}}\ \partial\Omega.
\end{cases}
\]
Here $0<\lambda_1\le \lambda_2\le\cdots\lambda_n\le\cdots$, with $\displaystyle\lim_{n\to\infty}\lambda_n=\infty$.
\end{lemma}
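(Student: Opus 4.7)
The plan is to adapt the classical construction of the Stokes eigenbasis (see e.g.\ Temam) to the closed subspace $\mathbb{H}_{\rm{sym}} \subset L^2_{\rm{div}}(\Omega)$. Equip $\mathbb{V}_{\rm{sym}}$ with the inner product $a(u,v) = \int_\Omega \nabla u : \nabla v$, which is coercive by Poincar\'e's inequality since $\mathbb{V}_{\rm{sym}} \subset H^1_0(\Omega)$. For any $f\in \mathbb{H}_{\rm{sym}}$, Lax-Milgram produces a unique $u=Tf\in \mathbb{V}_{\rm{sym}}$ satisfying $a(u,v)=(f,v)_{L^2}$ for every $v\in \mathbb{V}_{\rm{sym}}$. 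Since $\mathbb{V}_{\rm{sym}}$ is closed in $H^1_0(\Omega)$ (being the $H^1_0$-closure of $\mathbb{X}$), Rellich-Kondrachov yields compactness of the embedding $\mathbb{V}_{\rm{sym}} \hookrightarrow \mathbb{H}_{\rm{sym}}$, so $T:\mathbb{H}_{\rm{sym}}\to \mathbb{H}_{\rm{sym}}$ is compact; the symmetry of $a$ makes $T$ self-adjoint, and taking $v=u$ shows it is positive definite.

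Applying the spectral theorem for compact, self-adjoint, positive operators on the Hilbert space $\mathbb{H}_{\rm{sym}}$ produces an orthonormal basis $\{\phi_i\}_{i\ge 1}$ of $\mathbb{H}_{\rm{sym}}$ with eigenvalues $\mu_i>0$ monotonically decreasing to zero. Setting $\lambda_i = 1/\mu_i$ gives $0<\lambda_1 \le \lambda_2\le \cdots \to \infty$ and
\begin{equation*}
\int_\Omega \nabla \phi_i : \nabla v \,\d x = \lambda_i \int_\Omega \phi_i \cdot v \,\d x,\qquad \forall v\in \mathbb{V}_{\rm{sym}}.
\end{equation*}
The main step is then to upgrade this identity to all $v\in H^1_{0,\rm{div}}(\Omega)$ so that de Rham's theorem can be invoked to produce a pressure $P_i$ with $-\Delta\phi_i+\nabla P_i = \lambda_i \phi_i$ in $\mathcal{D}'(\Omega)$. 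For this, I would use the rotational invariance of both sides: for $v\in H^1_{0,\rm{div}}(\Omega)$, define its axisymmetric-without-swirl symmetrization $\bar v$ by averaging $R_\theta^\top v(R_\theta\cdot)$ over $\theta\in[0,2\pi)$ and projecting onto the span of $\EE_r, \EE_z$. Standard arguments show $\bar v\in \mathbb{V}_{\rm{sym}}$, while the axisymmetric-without-swirl property of $\phi_i$ gives $\int \nabla\phi_i:\nabla v = \int \nabla\phi_i:\nabla \bar v$ and $\int \phi_i\cdot v = \int \phi_i\cdot \bar v$, which is the crucial cancellation.

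This cancellation/symmetrization step is the only real obstacle. Once the variational identity holds for all divergence-free test functions, de Rham's theorem on the simply connected domain $\Omega$ provides the pressure $P_i$, determined up to a constant. Finally, smoothness of $\phi_i$ and $P_i$ follows from the standard interior and boundary $C^\infty$-regularity theory for the stationary Stokes system on smooth domains applied iteratively to the right-hand side $\lambda_i\phi_i$, together with bootstrapping.
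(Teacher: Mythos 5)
Your proposal is correct and follows essentially the same route as the paper: the paper also defines the Stokes bilinear form on $\mathbb{V}_{\rm{sym}}$, applies Lax--Milgram, uses compactness of the inverse into $\mathbb{H}_{\rm{sym}}$, and invokes the spectral theorem. In fact you go further than the paper's two-line argument by explicitly recovering the pressure (via the rotational-averaging and swirl-removal symmetrization, whose cancellation indeed works because the gradient of an axisymmetric no-swirl field is pointwise orthogonal to that of an axisymmetric pure-swirl field) and by addressing smoothness through Stokes regularity, both of which the paper leaves implicit.
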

\pf Define the Stokes operator 
$S: \mathbb{V}_{\rm{sym}}\to \mathbb{V}_{\rm{sym}}'$ by letting
$$
\langle Sv, w\rangle=\int_\Omega \nabla v\cdot\nabla w\,dx, \ \forall v, w\in \mathbb{V}_{\rm{sym}}.
$$
One can apply the Lax-Milgram theorem to conclude
that $S$ is a bijective, bounded linear map, and $S^{-1}:
\mathbb{V}_{\rm{sym}}'\to \mathbb{V}_{\rm{sym}}$
is a compact linear map. Hence the conclusion follows. \qed

Now we can set up the modified Galerkin method as in \cite[pages 506-507]{lin1995}. More precisely, 
for $m\ge 1$, let $\mathbb{P}_m:\mathbb H_{\rm{sym}}\to \mathbb H^m_{\rm{sym}}
\equiv {\rm{span}}\big\{\phi_1,\cdots,\phi_m\big\}$ denote the orthogonal projection. For any fixed $\eps>0$ and $0<T<\infty$, denote $Q_T=\Omega\times (0,T)$ and 
consider the following problem: 
\begin{align}
&\partial_t v_m
=\mathbb{P}_m \Big(\Delta v_m-v_m\cdot\nabla v_m
-\nabla\cdot(\nabla d_m\otimes\nabla d_m)\Big)
\ {\rm{in}}\ Q_T,\\
&v_m(\cdot, t)\in \mathbb{H}^m_{\rm{sym}}\ \ \ 0<t<T,\\
&\partial_t d_m+v_m\cdot\nabla d_m
=\Delta d_m +\frac{1}{\eps^2}(1-|d_m|^2)d_m
 \ {\rm{in}}\ Q_T,\\
&v_m(\cdot, 0)=\mathbb{P}_m v_0 \ \ \ 
 \ {\rm{in}} \ \ \ \ \  \Omega,\\
&d_m(\cdot, 0)=d_0 \ \ \ \ \ \ \ \ {\rm{in}} \ \ \ \ \  \Omega,\\
&d_m(\cdot, t)=d_0\ \ \ \ \ \ \ \ {\rm{on}}\ \ \ \ \ \partial\Omega\times [0,T].
\end{align}
By the theory of parabolic equations, we know that for any given $v_m(\cdot, t)\in \mathbb H^m_{\rm{sym}}$, the initial boundary problem
(4.3), (4.5), and (4.6) has a unique strong solution $d_m: Q_T\to \mathbb R^3$, that is axisymmetric without swirl. In particular
$d_m(\cdot, t)\in \mathbb V_{\rm{sym}}$ for all
$0\le t<T$. 

Next we can follow the proof of \cite[Theorem 2.1]{lin1995} line by line to show 
\begin{theorem} \label{Galerkin} For all $m\ge 1$,
$0<T<\infty$, and $\eps>0$, if $v_0\in \mathbb{H}_{\rm{sym}}$ and $d_0\in H^1(\Omega,\mathbb S^2)$, with $d_0\in H^{\frac32}(\partial\Omega,\mathbb S^2)$, is axisymmetric without swirl, the problem (4.1)--(4.6) admits a unique classical solution 
$v_m(\cdot, t)\in \mathbb{H}_{\rm{sym}}^m$, and $d_m(\cdot, t)$ that is axisymmetric without swirl for $0\le t<T$. Furthermore, $(v_m, d_m)$ satisfies
the followng energy inequality:
\begin{align}\label{globalenergyineq}
\sup_{0\le t\le T}
\int_\Omega \big(|v_m|^2+|\nabla d_m|^2+\frac{1}{2\eps^2}(1-|d_m|^2)^2\big)
&+2\int_{Q_T}\big(|\nabla v_m|^2+|\Delta 
d_m+\frac{1}{\epsilon^2}(1-|d_m|^2)d_m|^2\big)
\nonumber\\
&\le 
\int_\Omega \big(|v_0|^2+|\nabla d_0|^2\big).
\end{align}
\end{theorem}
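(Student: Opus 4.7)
The plan is to execute the Galerkin construction of Lin--Liu \cite{lin1995} inside the axisymmetric no-swirl subspace. Since Lemma 4.1 provides an orthonormal basis $\{\phi_i\}\subset \mathbb V_{\rm{sym}}$, the finite-dimensional subspace $\mathbb H^m_{\rm{sym}}$ is invariant under $\mathbb P_m$ and consists solely of axisymmetric no-swirl fields; thus (4.1) becomes a system of ODEs on a finite-dimensional axisymmetric space once $d_m$ has been determined. For local existence I would iterate: given $v_m \in C([0,T^*]; \mathbb H^m_{\rm{sym}})$, linear parabolic theory solves (4.3) uniquely for $d_m$, and rotational invariance of the equation, combined with the rotational invariance of the data, forces $d_m$ to be axisymmetric without swirl. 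Plugging $d_m$ back into the right-hand side of (4.1) yields a reduced ODE with a locally Lipschitz vector field on $\mathbb H^m_{\rm{sym}}$, and Cauchy--Lipschitz produces a local-in-time classical fixed point.

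The core a priori estimate is the energy identity obtained by testing (4.1) with $v_m$, testing (4.3) with $-\big(\Delta d_m + \eps^{-2}(1-|d_m|^2)d_m\big)$, and adding the two. The incompressibility of $v_m$ together with the cancellation
\begin{align*}
\int_\Omega (v_m\cdot\nabla d_m)\cdot\big(\Delta d_m + \tfrac{1}{\eps^2}(1-|d_m|^2)d_m\big)\,\d x + \int_\Omega \nabla\cdot(\nabla d_m\odot\nabla d_m)\cdot v_m\,\d x = 0
\end{align*}
kills the coupling, producing
\begin{align*}
\frac{d}{dt}\int_\Omega \Big(\tfrac12|v_m|^2+\tfrac12|\nabla d_m|^2+\tfrac{1}{4\eps^2}(1-|d_m|^2)^2\Big)
+ \int_\Omega \Big(|\nabla v_m|^2+\big|\Delta d_m+\tfrac{1}{\eps^2}(1-|d_m|^2)d_m\big|^2\Big) \le 0.
\end{align*}
Integration in time yields \eqref{globalenergyineq}, and the resulting uniform bounds preclude finite-time blow-up of the Galerkin approximation, so the local solution extends to $[0,T]$ for every $T<\infty$.

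Preservation of axisymmetry without swirl is built into the scheme: any rotation $Q\in SO(3)$ about the $z$-axis commutes with $\Delta$, with $\mathbb P_m$ (since $\mathbb H^m_{\rm{sym}}$ is $Q$-invariant), and with all nonlinearities, so by uniqueness the solution coincides with its rotated version; equivalently, the swirl components $v_m^\theta$ and $d_m^\theta$ satisfy homogeneous linear equations with vanishing data and are identically zero. The standard maximum principle applied to $|d_m|^2$ via (4.3) yields $|d_m|\le 1$ since $|d_0|\equiv 1$. The principal obstacle is verifying classical regularity rather than merely weak solvability: for $v_m$, spatial smoothness is inherited from $\phi_i\in C^\infty(\Omega)$ and temporal smoothness from the ODE, while for $d_m$ one bootstraps linear parabolic Schauder-type estimates, using the assumption $d_0|_{\partial\Omega}\in H^{3/2}(\partial\Omega,\mathbb S^2)$ to initialize the regularity chain. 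Uniqueness of the coupled system then follows from a Gronwall argument on the difference of two solutions, the finite dimensionality of $\mathbb H^m_{\rm{sym}}$ keeping all estimates closed.
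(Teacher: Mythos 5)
Your proposal follows the same route as the paper, which simply runs the Lin--Liu Galerkin scheme of \cite{lin1995} ``line by line'' inside the axisymmetric no-swirl subspace $\mathbb H^m_{\rm{sym}}$ built from the Stokes eigenfunctions of Lemma 4.1, with exactly the energy pairing (test the momentum equation with $v_m$, the director equation with $-(\Delta d_m+\eps^{-2}(1-|d_m|^2)d_m)$) and the symmetry-preservation-by-uniqueness argument you describe. One small correction: since both $\int_\Omega \nabla\cdot(\nabla d_m\odot\nabla d_m)\cdot v_m$ and $\int_\Omega (v_m\cdot\nabla d_m)\cdot\big(\Delta d_m+\eps^{-2}(1-|d_m|^2)d_m\big)$ reduce (using $\nabla\cdot v_m=0$ and $v_m|_{\partial\Omega}=0$) to the same quantity $\int_\Omega (v_m\cdot\nabla d_m)\cdot\Delta d_m$, the cancellation in the summed energy identity occurs as their \emph{difference}, not their sum as in your displayed formula; the resulting inequality \eqref{globalenergyineq} is nevertheless exactly what you obtain.
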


\medskip
Finally, as in \cite[pages 512-512]{lin1995}, by \eqref{globalenergyineq} we can pass the limit $m\to\infty$ of
the system (4.1)--(4.6) to obtain a weak solution
$(u_\eps, d_\eps)$ of \eqref{ginzburgLandau}, \eqref{initialDataGL} and \eqref{boundaryDataGL},
that is axisymmetric without swirl in $Q_T$,  and
$(u_\eps, d_\eps)$ satisfies \eqref{energyLawGL}.

\bigskip
\bigskip
\noindent{\bf Acknowledgments:}  The second author is partially supported by NSF grant 2101224.
The first author would like to thank Francesco De Anna for helpful discussions on the subject.

\end{document}